\DeclareMathOperator{\Ad}{Ad}
\DeclareMathOperator{\ad}{ad}
\DeclareMathOperator{\Aut}{Aut}
\DeclareMathOperator{\tr}{tr}
\DeclareMathOperator{\Ric}{Ric}
\newcommand{\fr}{\mathfrak}
\newcommand{\al}{\alpha}
\newcommand{\be}{\beta}
\newcommand{\bb}{\mathbb}
\DeclareMathOperator{\SO}{SO}
\DeclareMathOperator{\Sp}{Sp}
\newcommand{\thickline}{\noalign{\hrule height 1pt}}
 \newtheorem{lemma} {Lemma} [section]
\newtheorem{theorem}[lemma]{Theorem}
\begin{document}

\title{New homogeneous Einstein metrics on Stiefel manifolds} 
\author{Andreas Arvanitoyeorgos, Yusuke Sakane  and Marina Statha}
\address{University of Patras, Department of Mathematics, GR-26500 Rion, Greece}
\email{arvanito@math.upatras.gr}
 \address{Osaka University, Department of Pure and Applied Mathematics, Graduate School of Information Science and Technology, Toyonaka, 
Osaka 560-0043, Japan}
 \email{sakane@math.sci.osaka-u.ac.jp}
\address{University of Patras, Department of Mathematics, GR-26500 Rion, Greece}
\email{statha@master.math.upatras.gr} 
\medskip
 \thanks{The second  author  was supported by JSPS KAKENHI Grant Number 25400071.}

   \begin{abstract}
We consider invariant Einstein metrics on the Stiefel manifold $V_q\bb{R} ^n$  of all orthonormal $q$-frames in $\bb{R}^n$. 
This manifold is diffeomorphic to the homogeneous space $\SO(n)/\SO(n-q)$   and its
isotropy representation contains equivalent summands.  
 We prove, by assuming additional symmetries, that $V_4\bb{R}^n$ $(n\ge 6)$ admits at least 
four $\SO(n)$-invariant Einstein metrics, two of which are Jensen's metrics and the other two are new metrics. Moreover, we prove that $V_5\bb{R}^7$ admits at least 
six invariant Einstein metrics, two of which are Jensen's metrics and the other four are new metrics. 
  
  \medskip
\noindent 2010 {\it Mathematics Subject Classification.} Primary 53C25; Secondary 53C30, 13P10, 65H10, 68W30.

\medskip
\noindent {\it Keywords}:    Homogeneous space, Einstein metric, Stiefel manifold, isotropy representation, Gr\"obner basis.
   \end{abstract}

\maketitle


 \section{Introduction}
\markboth{Andreas Arvanitoyeorgos, Yusuke Sakane and Marina Statha}{New homogeneous Einstein metrics on Stiefel manifolds and compact Lie groups}

A Riemannian manifold $(M, g)$ is called Einstein if it has constant Ricci curvature, i.e. $\Ric_{g}=\lambda\cdot g$ for some $\lambda\in\bb{R}$. 
  A detailed exposition on Einstein manifolds can be found in \cite{Be}, \cite{W1} and  \cite{W2}.
  General existence results are difficult to obtain and some methods are described
  in \cite{Bom}, \cite{BWZ} and \cite{WZ}.  
Also, in  \cite{ADN1} the authors introduced a method for proving existence of homogeneous
Einstein metrics by assuming additional symmetries.

 In the present article we are interested in invariant Einstein metrics on homogeneous spaces $G/H$ whose isotropy representation is decomposed into a sum of irreducible but possibly equivalent summands.
  By using the method of \cite{ADN1} we study homogeneous Einstein metrics on Stiefel manifolds. 
  
   The Stiefel manifolds are the homogeneous spaces $V_k\mathbb{R} ^n=\SO(n)/\SO(n-k)$ and the simplest case is
   the sphere $S^{n-1}=\SO(n)/\SO(n-1)$, which is an irreducible symmetric space, therefore it admits up to scale a unique invariant Einstein metric.
Concerning Einstein metrics on other Stiefel manifolds we review the following:
   In \cite{K} S. Kobayashi proved existence of an invariant Einstein metric on the unit tangent bundle $T_1S^n = \SO(n)/\SO(n-2)$.  In \cite{S} A. Sagle proved that the Stiefel manifolds 
   $V_k\mathbb{R} ^n=\SO(n)/\SO(n-k)$ admit at least one homogeneous Einstein metric.  
   For $k\ge 3$ G. Jensen in \cite{J2} found a second metric.
  If $n=3$ the Lie group $\SO(3)$ admits a unique Einstein metric.
  For $n\ge 5$  A. Back and W.Y. Hsiang in \cite{BH} proved that $\SO(n)/\SO(n-2)$ admits
  exactly one homogeneous Einstein metric. The same result was obtained by M. Kerr in \cite{Ke} by proving that the diagonal metrics are the only invariant metrics on $V_2\mathbb{R}^n$ (see also \cite{A1}, \cite{AK}).
  The Stiefel manifold $\SO(4)/\SO(2)$ admits exactly two invariant Einstein metrics (\cite{ADF}).
  One is Jensen's metric and the other one is the product metric on $S^3\times S^2$.
  
  Finally, in \cite{ADN1} the first author, V.V. Dzhepko and Yu. G. Nikonorov proved that
  for $s>1$ and $\ell >k\ge 3$ the Stiefel manifold $\SO(s k+\ell)/\SO(\ell)$ admits at least four
  $\SO(s k+\ell)
  $-invariant Einstein metrics, two of which are Jensen's metrics.
  The special case $\SO( 2 k + \ell)/\SO(\ell)$ admitting at least four
  $\SO(2 k+\ell) $-invariant Einstein metrics was treated in \cite{ADN2}.
  Corresponding results for the quaternionic Stiefel manifolds $\Sp(sk+\ell)/\Sp(\ell)$ were obtained in
  \cite{ADN3}.
  In the present paper we prove that  $\SO(n)/\SO(n-4)$ admits two more $\SO(n)$-invariant Einstein metrics and that
  $\SO(7)/\SO(2)$ admits four more $\SO(7)$-invariant Einstein metrics.

\section{$G$-invariant metrics with additional symmetries}

We review a construction originally introduced in \cite{ADN1}.  Let $G$ be a compact Lie group and $H$ a closed subgroup so that $G$ acts almost effectively on $G/H$.  Let $\fr{g}, \fr{h}$ be the Lie algebra of $G$ and $H$ and let $\fr{g} = \fr{h} \oplus \fr{m}$ be a reductive decomposition of $\fr{g}$ with respect to some $\Ad(G)$-invariant inner product on $\fr{g}$.  The orthogonal complement $\fr{m}$ can be identified with the tangent space $T_{o}(G/H)$, where $o = eH$.  Any $G$-invariant metric $g$ of $G/H$ corresponds to an $\Ad(H)$-invariant inner product $\left\langle \cdot, \cdot \right\rangle$ on $\fr{m}$ and vice versa.  For $G$ semisimple the negative of the Killing form $B$ of $\fr{g}$ is an $\Ad(G)$-invariant inner product on $\fr{g}$, therefore we can choose the above decomposition with the respect to $-B$.  

The normalizer $N_{G}(H)$ of $H$ in $G$ acts on $G/H$ by $(\al, gH) \mapsto g\al^{-1}H$. For a fixed 
$\al\in N_{G}(H)$ this action induces a $G$-equivariant diffeomorphism $\varphi_{\al} : G/H \to G/H$.  Let $K$ be a closed subgroup of $G$ with $H\subset K \subset G$ such that $K = L'\times H'$, where $L', H'$ are subgroups of $G$ with $H = \{e_{L'}\}\times H'$.  It is clear that $K\subset N_{G}(H)$.  Consider the group $L = L'\times \{e_{H'}\}$.  Then the group $\widetilde{G} = G\times L$ acts on $G/H$ by $(\al, \be)\cdot gH = \al g \be^{-1}H$ and the isotropy $\widetilde{H}$ at $eH$ is given as follows:  

\begin{lemma}(\cite[Lemma 2.1]{ADN1})  
The isotropy subgroup $\widetilde{H}$ is isomorphic to $K$. 
\end{lemma}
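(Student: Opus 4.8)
The plan is to identify $\widetilde{H}$ explicitly as a subgroup of $\widetilde{G} = G \times L$ and then construct an isomorphism onto $K$.

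\medskip

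First I would write down the defining condition for the isotropy. An element $(\alpha, \beta) \in \widetilde{G}$, with $\alpha \in G$ and $\beta \in L = L' \times \{e_{H'}\}$, fixes the base point $eH$ precisely when $\alpha \, e \, \beta^{-1} H = eH$, i.e. when $\alpha \beta^{-1} \in H$. Thus
\begin{equation}
\widetilde{H} = \{(\alpha,\beta) \in G \times L : \alpha\beta^{-1} \in H\}.
\end{equation}
Writing $\beta = (\ell, e_{H'})$ with $\ell \in L'$ and recalling that $H = \{e_{L'}\} \times H'$, the membership $\alpha\beta^{-1} \in H$ forces $\alpha$ to have the form $\alpha = h\beta$ for some $h \in H$, so $\alpha = (\ell, h')$ for some $h' \in H'$ (viewing $\alpha$ inside $K = L' \times H'$). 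In other words, $\widetilde{H}$ consists of pairs $\big((\ell, h'),\,(\ell,e_{H'})\big)$ with $\ell \in L'$ and $h' \in H'$.

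\medskip

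Next I would exhibit the isomorphism with $K = L' \times H'$. The natural candidate is the map
\begin{equation}
\Phi : K \to \widetilde{H}, \qquad (\ell, h') \mapsto \big((\ell,h'),\,(\ell,e_{H'})\big).
\end{equation}
I would check that $\Phi$ is a well-defined group homomorphism (this is immediate, since the first coordinate is the identity embedding $K \hookrightarrow G$ and the second records only the $L'$-component, both of which are homomorphisms), that it is injective (the first coordinate alone already recovers $(\ell,h')$), and that it is surjective onto $\widetilde{H}$ by the description of $\widetilde{H}$ obtained above. Smoothness and smoothness of the inverse are clear since all maps involved are restrictions of coordinate projections and the inclusion $K \subset G$.

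\medskip

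The only genuine point requiring care — the main obstacle, such as it is — is bookkeeping the two different ways the factor $L'$ enters: once through the $G$-action (inside $\alpha \in K \subset G$) and once through the $L$-action (inside $\beta$). One must verify that the constraint $\alpha\beta^{-1}\in H$ correctly couples the $L'$-components of $\alpha$ and $\beta$ to be equal while leaving the $H'$-component of $\alpha$ free, which is exactly what yields the $L' \times H'$ structure and hence $\widetilde{H} \cong K$. Since $H = \{e_{L'}\}\times H'$ has trivial $L'$-part, no further interaction arises and the identification is clean.
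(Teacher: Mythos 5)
Your proof is correct: the identification $\widetilde{H}=\{(h\beta,\beta):h\in H,\ \beta\in L\}=\bigl\{\bigl((\ell,h'),(\ell,e_{H'})\bigr):\ell\in L',\ h'\in H'\bigr\}$ and the isomorphism $\Phi$ are exactly the direct computation behind this lemma. The paper itself gives no proof, quoting the statement from \cite[Lemma 2.1]{ADN1}, and your argument coincides with the standard one given there, so there is nothing to add.
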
 

Hence, we have the diffeomorphisms 
\begin{equation}\label{diff1}
G/H\cong (G\times L)/K = (G\times L)/ (L'\times H')\cong \widetilde{G}/\widetilde{H}.
\end{equation}

The elements of the set $\mathcal{M}^{G}$, of $G$-invariant metrics on $G/H$, are in $1-1$ correspondence with $\Ad(H)$-invariant inner products on $\fr{m}$.  We now consider  $\Ad(K)$-invariant inner products on $\fr{m}$ (and not only $\Ad(H)$-invariant) with corresponding subset $\mathcal{M}^{G, K}$ of $\mathcal{M}^{G}$.  Let $g\in \mathcal{M}^{G, K}$ and $\al\in K$.  The diffeomorphism $\varphi_{\al}$ is an isometry of $(G/H, g)$.  The action of $\widetilde{G}$ on $(G/H, g)$ is isometric, so any metric in $\mathcal{M}^{G, K}$ can be identified as a metric in $\mathcal{M}^{\widetilde{G}}$, the set of $\widetilde{G}$-invariant metrics on $\widetilde{G}/\widetilde{H}$, and vice versa.  Therefore, we have $\mathcal{M}^{\widetilde{G}}\subset \mathcal{M}^{G}$.   

\smallskip 

We consider the isotropy representation $\chi : H\to\Aut (\fr{m})$ of $G/H$, which is the restriction
of the adjoint representation of $K$ on $\fr{m}$.
We assume that $\chi$ decomposes into a direct sum of irreducible subrepresentations 

$$
\chi \cong \chi_{1}\oplus\cdots\oplus\chi_{s},
$$
giving rise to a decomposition
$$
\fr{m}=\fr{m}_{1}\oplus\cdots\oplus\fr{m}_{s}
$$
into irreducible $\Ad(H)$-modules.
Note that some of the $\chi_{i}$'s can be equivalent so the above decomposition is not unique.  

We now consider the isotropy representation $\psi :\widetilde{H} \to\Aut(\fr{m})$ of
$\widetilde{G}/\widetilde{H}$ and assume corresponding decompositions

\begin{equation}\label{isotr}
\psi \cong \psi _1\oplus\cdots\oplus\psi _k
\end{equation}
and
\begin{equation}\label{submod}
\fr{m}=\widetilde{\fr{m}}_{1}\oplus\cdots\oplus\widetilde{\fr{m}}_{k},\ \ (k\le s)
\end{equation}
into irreducible $\Ad(K)$-modules.

It is possible that for certain examples of homogeneous spaces $G/H$  (e.g. $\SO(n)/\SO(\ell)$) to choose $L', H'$ (hence the subgroup $K$) such that the submodules
$\widetilde{\fr{m}}_{i}$ are not equivalent, therefore we can obtain a unique expression
of $\widetilde{G}$-invariant metrics determined by $\Ad (K)$-invariant scalar products

\begin{equation}\label{diag}
\langle \cdot , \cdot \rangle = y_1(-B)|_{\widetilde{\fr{m}}_{1}} + \cdots +
y_k(-B)|_{\widetilde{\fr{m}}_{k}}.
\end{equation}
Hence one can hope to find Einstein metrics among such {\it diagonal} metrics $g\in\mathcal{M}^{\widetilde{G}}$.

\section{The Ricci tensor for reductive homogeneous spaces}
 We recall an expression for the Ricci tensor for a $G$-invariant Riemannian
metric on a reductive homogeneous space whose isotropy representation
is decomposed into a sum of non equivalent irreducible summands.

Let $G$ be a compact semisimple Lie group, $K$ a connected closed subgroup of $G$  and  
let  $\frak g$ and $\fr{k}$  be  the corresponding Lie algebras. 
The Killing form $B$ of $\frak g$ is negative definite, so we can define an $\mbox{Ad}(G)$-invariant inner product $-B$ on 
  $\frak g$. 
Let $\frak g$ = $\frak k \oplus
\frak m$ be a reductive decomposition of $\frak g$ with respect to $-B$ so that $\left[\,\frak k,\, \frak m\,\right] \subset \frak m$ and
$\frak m\cong T_o(G/K)$.
 We assume that $ {\frak m} $ admits a decomposition into mutually non equivalent irreducible $\mbox{Ad}(K)$-modules as follows: \ 
\begin{equation}\label{iso}
{\frak m} = {\frak m}_1 \oplus \cdots \oplus {\frak m}_q.
\end{equation} 
Then any $G$-invariant metric on $G/K$ can be expressed as  
\begin{eqnarray}
 \langle  \,\,\, , \,\,\, \rangle  =  
x_1   (-B)|_{\mbox{\footnotesize$ \frak m$}_1} + \cdots + 
 x_q   (-B)|_{\mbox{\footnotesize$ \frak m$}_q},  \label{eq2}
\end{eqnarray}
for positive real numbers $(x_1, \dots, x_q)\in\bb{R}^{q}_{+}$.  Note that  $G$-invariant symmetric covariant 2-tensors on $G/K$ are 
of the same form as the Riemannian metrics (although they  are not necessarilly  positive definite).  
 In particular, the Ricci tensor $r$ of a $G$-invariant Riemannian metric on $G/K$ is of the same form as (\ref{eq2}), that is 
 \[
 r=z_1 (-B)|_{\mbox{\footnotesize$ \frak m$}_1}  + \cdots + z_{q} (-B)|_{\mbox{\footnotesize$ \frak m$}_q} ,
 \]
 for some real numbers $z_1, \ldots, z_q$.

Let $\lbrace e_{\alpha} \rbrace$ be a $(-B)$-orthonormal basis 
adapted to the decomposition of $\frak m$,    i.e. 
$e_{\alpha} \in {\frak m}_i$ for some $i$, and
$\alpha < \beta$ if $i<j$. 
We put ${A^\gamma_{\alpha
\beta}}= -B \left(\left[e_{\alpha},e_{\beta}\right],e_{\gamma}\right)$ so that
$\left[e_{\alpha},e_{\beta}\right]
= \displaystyle{\sum_{\gamma}
A^\gamma_{\alpha \beta} e_{\gamma}}$ and set 
$\displaystyle{k \brack {ij}}=\sum (A^\gamma_{\alpha \beta})^2$, where the sum is
taken over all indices $\alpha, \beta, \gamma$ with $e_\alpha \in
{\frak m}_i,\ e_\beta \in {\frak m}_j,\ e_\gamma \in {\frak m}_k$ (cf. \cite{WZ}).  
Then the positive numbers $\displaystyle{k \brack {ij}}$ are independent of the 
$-B$-orthonormal bases chosen for ${\frak m}_i, {\frak m}_j, {\frak m}_k$,
and 
$\displaystyle{k \brack {ij}}\ =\ \displaystyle{k \brack {ji}}\ =\ \displaystyle{j \brack {ki}}.  
 \label{eq3}
$

Let $ d_k= \dim{\frak m}_{k}$. Then we have the following:

\begin{lemma}\label{ric2}\textnormal{(\cite{PS})}
The components ${ r}_{1}, \dots, {r}_{q}$ 
of the Ricci tensor ${r}$ of the metric $ \langle  \,\,\, , \,\,\, \rangle $ of the
form {\em (\ref{eq2})} on $G/K$ are given by 
\begin{equation}
{r}_k = \frac{1}{2x_k}+\frac{1}{4d_k}\sum_{j,i}
\frac{x_k}{x_j x_i} {k \brack {ji}}
-\frac{1}{2d_k}\sum_{j,i}\frac{x_j}{x_k x_i} {j \brack {ki}}
 \quad (k= 1,\ \dots, q),    \label{eq51}
\end{equation}
where the sum is taken over $i, j =1,\dots, q$.
\end{lemma}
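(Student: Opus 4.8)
The plan is to combine Schur's lemma, which pins down the shape of $r$, with the standard formula for the Ricci curvature of a homogeneous metric, and then to perform the substitution into a $(-B)$-orthonormal basis, bookkeeping everything with the symbols ${k\brack {ij}}$. First I would note that the Ricci tensor $r$ of a $G$-invariant metric is itself $G$-invariant, so it restricts to an $\Ad(K)$-invariant symmetric bilinear form on $\fr m$. Because the summands $\fr m_1,\dots,\fr m_q$ in \eqref{iso} are irreducible and \emph{mutually non-equivalent}, Schur's lemma forces $r(\fr m_i,\fr m_j)=0$ for $i\ne j$ and $r|_{\fr m_k}$ to be a scalar multiple of $(-B)|_{\fr m_k}$; this is exactly where the non-equivalence hypothesis enters, and it is what guarantees the diagonal shape of $r$ asserted just before the statement. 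Consequently each component $r_k$ can be recovered by evaluating the Ricci quadratic form on $\fr m_k$: choosing a $(-B)$-orthonormal adapted basis $\{e_\al\}$ and setting $f_\al=e_\al/\sqrt{x_{i}}$ for $e_\al\in\fr m_i$ (so that $\{f_\al\}$ is $\langle\cdot,\cdot\rangle$-orthonormal), one has $r_k=\frac{1}{d_k}\sum_{e_\al\in\fr m_k}r(f_\al,f_\al)$, all summands being equal by Schur.

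Next I would invoke the standard expression for the Ricci curvature of a homogeneous metric on a compact semisimple $G$ with background $-B$ (see \cite{WZ}): for $X\in\fr m$,
\[
r(X,X)=-\tfrac12\sum_{\al}\bigl|[X,f_\al]_{\fr m}\bigr|^2-\tfrac12 B(X,X)+\tfrac14\sum_{\al,\be}\bigl\langle[f_\al,f_\be]_{\fr m},X\bigr\rangle^2,
\]
the sums running over the $\langle\cdot,\cdot\rangle$-orthonormal basis. Substituting $X=f_\sigma$ with $e_\sigma\in\fr m_k$ and replacing every $\langle\cdot,\cdot\rangle$ and every $\fr m$-bracket by $-B$ and the structure constants $A^\gamma_{\al\be}=-B([e_\al,e_\be],e_\gamma)$ turns each term into a multiple of $(A^\gamma_{\al\be})^2$ weighted by a monomial in the $x_i$'s. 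Grouping these triple sums according to which modules contain the three basis vectors assembles them into the symbols ${k\brack {ij}}=\sum(A^\gamma_{\al\be})^2$.

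Carrying out the three terms separately (summing over $e_\sigma\in\fr m_k$ and dividing by $d_k$), I expect: the Killing-form term $-\tfrac12 B(f_\sigma,f_\sigma)=\tfrac{1}{2x_k}$ gives the first summand; the term $\tfrac14\sum_{\al,\be}\langle[f_\al,f_\be]_{\fr m},f_\sigma\rangle^2$ gives $\frac{1}{4d_k}\sum_{i,j}\frac{x_k}{x_ix_j}{k\brack {ij}}$; and $-\tfrac12\sum_\al|[f_\sigma,f_\al]_{\fr m}|^2$ gives $-\frac{1}{2d_k}\sum_{i,j}\frac{x_j}{x_kx_i}{j\brack {ki}}$. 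Relabelling the summation indices and using the total symmetry ${k\brack {ij}}={k\brack {ji}}={j\brack {ki}}$ (which follows from the complete antisymmetry of $A^\gamma_{\al\be}$ in $\al,\be,\gamma$, itself a consequence of the $\Ad$-invariance of $-B$) then reproduces exactly \eqref{eq51}.

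The main obstacle is the bookkeeping in the middle step: one must keep the $\fr m$- and $\fr k$-components of each bracket carefully apart so that only the $\fr m$-valued constants $A^\gamma_{\al\be}$ survive, track the three independent factors $\sqrt{x_i},\sqrt{x_j},\sqrt{x_k}$ through every term, and check that the regrouping yields precisely the coefficients $\tfrac12,\tfrac14,\tfrac12$ with the stated $x$-monomials and no residual terms. A cleaner but essentially equivalent route is to first compute the scalar curvature $S=\tfrac12\sum_i\frac{d_i}{x_i}-\tfrac14\sum_{i,j,l}\frac{x_l}{x_ix_j}{l\brack {ij}}$ and then recover the components from the variational identity $r_k=-\frac{x_k}{d_k}\,\partial S/\partial x_k$ coming from the first variation of the total scalar curvature; differentiating and applying the same symmetry of the ${l\brack {ij}}$ reproduces \eqref{eq51} directly.
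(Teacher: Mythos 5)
The paper offers no internal proof of this lemma --- it is quoted from \cite{PS} (and goes back to the standard formula in \cite{Be}, \cite{WZ}) --- and your argument is precisely the derivation behind that citation, carried out correctly: Schur's lemma (note a symmetric intertwiner of a real irreducible module is scalar, since its eigenspaces are invariant) justifies the diagonal form and the averaging $r_k=\frac{1}{d_k}\sum_{e_\sigma\in\fr{m}_k}r(f_\sigma,f_\sigma)$, the quoted Ricci formula is valid because compact $G$ is unimodular (so the $Z$-term vanishes), and the weights $\frac{x_k}{x_ix_j}$, $\frac{x_j}{x_kx_i}$ come out as you state. Your variational shortcut is also sound, since for $G$-invariant variations the divergence terms in the first variation of $S$ are constants integrating to zero, giving $r_k=-\frac{x_k}{d_k}\,\partial S/\partial x_k$, which together with the symmetry of the triples reproduces \eqref{eq51} exactly.
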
 
Since by assumption the submodules $\fr{m}_{i}, \fr{m}_{j}$ in the decomposition (\ref{iso}) are matually non equivalent for any $i\neq j$, it is $r(\fr{m}_{i}, \fr{m}_{j})=0$ whenever $i\neq j$.  Thus by Lemma \ref{ric2} it follows that    $G$-invariant Einstein metrics on $M=G/K$ are exactly the positive real solutions $g=(x_1, \ldots, x_q)\in\bb{R}^{q}_{+}$  of the  polynomial system $\{r_1=\lambda, \ r_2=\lambda, \ \ldots, \ r_{q}=\lambda\}$, where $\lambda\in \bb{R}_{+}$ is the Einstein constant.


\section{The Stiefel manifolds $V_k\mathbb{R}^n=\SO(n)/\SO(n-k)$}
    
We first review the isotropy representation of $V_k\mathbb{R}^n=G/H=\SO(n)/\SO(n-k)$.
Let $\lambda _n$ denote the standard representation of $\SO(n)$ (given by the natural action of
$\SO(n)$ on $\mathbb{R}^n$.  If $\wedge ^2\lambda _n$ denotes the second exterior power of $\lambda _n$, then $\Ad ^{\SO(n)}=\wedge ^2\lambda _n$.
The isotropy representation $\chi$ of $\SO(n)/\SO(n-k)$ is characterized by the property
$\left.\Ad ^{\SO(n)}\right |_{\SO(n-k)}=\Ad ^{\SO(n-k)}\oplus\chi$. 
We compute

\begin{equation}\label{isotropy}
\left.\Ad ^{\SO(n)}\right |_{\SO(n-k)}=\wedge ^2\lambda _n\big| _{\SO(n-k)}=\wedge ^2 (\lambda _{n-k}\oplus k)=\wedge ^2\lambda _{n-k}\oplus \wedge ^2k\oplus (\lambda _{n-k}\otimes k),
\end{equation}
    where $k$ denotes the trivial $k$-dimensional representation.
    Therefore the isotropy representation is given by
    $$
    \chi = \wedge ^2k\oplus \underbrace{\lambda _{n-k}\oplus \cdots\oplus\lambda _{n-k}}_{k}.
$$
We can further decompose $\wedge ^2k$ into a sum of ${k}\choose{2}$ $1$-dimensional sub-representations, so we obtain that
\begin{equation}\label{Stiefel}
\chi = \underbrace{1\oplus \cdots\oplus 1}_{{k}\choose{2}}\oplus \underbrace{\lambda _{n-k}\oplus \cdots\oplus\lambda _{n-k}}_{k}.
\end{equation} 
This decomposition induces an $\Ad (H)$-invariant decomposition of $\fr{m}$ given by

\begin{equation}\label{decomp}
\fr{m}=\fr{m}_1\oplus\cdots\oplus\fr{m}_s,
\end{equation}
where the first ${k}\choose{2}$ $\Ad (H)$-submodules are $1$-dimensional and the rest
$k$ $\Ad (H)$-submodules are $(n-k)$-dimensional.
Note that decomposition (\ref{Stiefel}) contains equivalent summands so a complete description of all $G$-invariant metrics associated to decomposition (\ref{decomp}) is rather hard.

\smallskip
We embed the group $\SO(n-k)$ in $\SO(n)$ as 
$\begin{pmatrix}
0_k & 0\\
0 & C
\end{pmatrix}
$
where $C \in \SO(n-k)$.  The Killing form of $\fr{so}(n)$ is $B(X, Y)=(n-2)\tr XY$.
Then with respect to
$-B$ the subspace $\fr{m}$
 may be identified with the set 
 of matrices of the form
$$
\begin{pmatrix}
D_k & A\\
-A^t & 0_{n-k}
\end{pmatrix},
$$
in $\fr{so}(n)$, 
where $D_k \in \fr{so}(k)$ 
 and $A$ is an $k\times (n-k)$ real matrix.
Let $E_{ab}$ denote the $n\times n$ matrix with $1$ at the $(ab)$-entry and $0$ elsewhere.
Then the set
$\mathcal{B}=\{e_{ab}=E_{ab}-E_{ba}: 1\le a\le k,\ 1\le a<b\le n\}$
constitutes a $-B$-orthogonal basis of $\fr{m}$.
Note that $e_{ba}=-e_{ab}$, thus we have the following:

\begin{lemma}\label{brac}
If all four indices are distinct, then the Lie brakets in $\mathcal{B}$ are zero.
Otherwise,
$[e_{ab}, e_{bc}]=e_{ac}$, where $a,b,c$ are distinct.
\end{lemma}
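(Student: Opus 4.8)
The plan is to reduce the statement to the standard commutation relations of $\fr{so}(n)$, obtained by a direct matrix computation. First I would recall the elementary product rule $E_{ab}E_{cd}=\delta_{bc}E_{ad}$ for the matrix units, which encodes all the information needed, since $[X,Y]=XY-YX$ and each basis element $e_{ab}=E_{ab}-E_{ba}$ is a difference of two such units.

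Next I would expand the commutator $[e_{ab},e_{cd}]=(E_{ab}-E_{ba})(E_{cd}-E_{dc})-(E_{cd}-E_{dc})(E_{ab}-E_{ba})$ into eight products, apply the rule above to each, and collect the resulting terms in pairs so that the matrix units reassemble into the antisymmetric combinations $E_{pq}-E_{qp}=e_{pq}$. This yields the general formula
\[
[e_{ab},e_{cd}]=\delta_{bc}\,e_{ad}+\delta_{ad}\,e_{bc}-\delta_{ac}\,e_{bd}-\delta_{bd}\,e_{ac},
\]
from which both assertions of the lemma follow by inspection.

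Finally I would specialize. If the four indices $a,b,c,d$ are pairwise distinct, then every Kronecker delta on the right-hand side vanishes, giving $[e_{ab},e_{cd}]=0$, which is the first claim. For the second claim I substitute $(c,d)=(b,c)$ with $a,b,c$ distinct: three of the four deltas vanish and the surviving term is $\delta_{bb}\,e_{ac}=e_{ac}$. Here I would also note that the potential diagonal contributions $e_{pp}$ are automatically zero, since $e_{pp}=E_{pp}-E_{pp}=0$, so no spurious terms appear.

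I do not expect any genuine obstacle, as the whole argument is a finite computation. The only points demanding care are clerical: keeping track of the signs among the eight products and pairing them correctly, and remembering the antisymmetry convention $e_{ba}=-e_{ab}$ together with $e_{pp}=0$. It is precisely this convention that lets every nonzero bracket of two distinct basis elements (those sharing exactly one index) be brought into the normalized shape $[e_{ab},e_{bc}]=e_{ac}$ recorded in the lemma.
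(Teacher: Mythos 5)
Your proposal is correct: the general relation $[e_{ab},e_{cd}]=\delta_{bc}\,e_{ad}+\delta_{ad}\,e_{bc}-\delta_{ac}\,e_{bd}-\delta_{bd}\,e_{ac}$ follows exactly as you compute from $E_{ab}E_{cd}=\delta_{bc}E_{ad}$, and both claims of the lemma follow by the specializations you indicate. The paper states this lemma without proof (it is the standard $\fr{so}(n)$ commutation relation, noting only $e_{ba}=-e_{ab}$), and your direct matrix-unit computation is precisely the verification implicitly being relied upon.
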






\section{The Stiefel manifolds $V_{k_1+k_2}\mathbb{R}^{k_1+k_2+k_3} = \SO(k_1+k_2+k_3)/\SO(k_3)$}

We first consider the homogeneous space $G/K= \SO(k_1+k_2+k_3)/(\SO(k_1)\times\SO(k_2)\times\SO(k_3))$, where the embedding of $K$ in $G$ is diagonal. 
Let $\sigma _i:\SO(k_1)\times\SO(k_2)\times\SO(k_3)\to \SO(k_i)$  be the projection onto
the factor $SO(k_i)$ ($i=1,2,3$) and let $p_{k_i}=\lambda _{k_i}\circ\sigma$.
Then by a similar computation as in (\ref{isotropy}) we obtain that
$$
\left.\Ad ^G\right |_K=\Ad ^K\oplus (p_{k_1}\otimes p_{k_2})\oplus (p_{k_1}\otimes p_{k_3})
\oplus (p_{k_2}\otimes p_{k_3}),
$$
so the isotropy representation of $G/K$ has the form
$$
\chi\cong \chi _{12}\oplus\chi _{13}\oplus\chi _{23},
$$
and the tangent space $\fr{m} $ of $G/K$ decomposes into  three  $\Ad(K)$-submodules 
 $$\fr{m} = \fr{m}_{12}\oplus  \fr{m}_{13}\oplus  \fr{m}_{23}. 
 $$
In fact, 
  $ \fr{m}$ is given by  $\fr{k}^{\perp} $ in $ \fr{g} = \fr{so}(k_1+ k_2+k_3)$ with respect to  $-B$. If we denote by $M(p,q)$ the set of all $p \times q$ matrices, then we see that 
  $ \fr{m}$ is given by 
  \begin{equation*}
\fr{m}=  \left\{\begin{pmatrix}
 0 & {A}_{12} & {A}_{13}\\
 -{}^{t}_{}\!{A}_{12} & 0 & {A}_{23}\\
 -{}^{t}_{}\!{A}_{13} & -{}^{t}_{}\!{A}_{23} & 0 
 \end{pmatrix} \  \Big\vert \  {A}_{12} \in M(k_1, k_2), {A}_{13} \in M(k_1, k_3), {A}_{23} \in M(k_2, k_3) \right\} 
 \end{equation*}
and we have 
   \begin{equation*}
 \fr{m}_{12}= \begin{pmatrix}
 0 & {A}_{12} & 0\\
 -{}^{t}_{}\!{A}_{12} & 0 &0\\
0  & 0 & 0 
 \end{pmatrix},  \quad  
 \fr{m}_{13}= \begin{pmatrix}
 0 & 0 &{A}_{13}\\
0 & 0 &0\\
 -{}^{t}_{}\!{A}_{13}   & 0 & 0 
 \end{pmatrix},  \quad  
 \fr{m}_{23}= \begin{pmatrix}
 0 & 0 & 0\\
0 & 0 &{A}_{23}\\
0  &  -{}^{t}_{}\!{A}_{23} & 0 
 \end{pmatrix}.  
 \end{equation*}
 Note that the action of $\Ad(k)$ ($k \in K$) on $ \fr{m}$ is given by 
 \begin{equation*}
\Ad(k) \begin{pmatrix}
 0 & {A}_{12} & {A}_{13}\\
 -{}^{t}_{}\!{A}_{12} & 0 & {A}_{23}\\
 -{}^{t}_{}\!{A}_{13} & -{}^{t}_{}\!{A}_{23} & 0 
 \end{pmatrix}  = 
  \begin{pmatrix}
 0 &{}^t h_1 {A}_{12} h_2 & {}^t h_1{A}_{13} h_3\\
 -{}^{t}_{}h_2 {}^{t}_{}\!{A}_{12}h_1 & 0 & {}^t h_2{A}_{23} h_3\\
 -{}^{t}_{}h_3{}^{t}_{}\!{A}_{13} h_1 & -{}^{t}_{}h_3{}^{t}_{}\!{A}_{23} h_2& 0 
 \end{pmatrix}, 
  \end{equation*}
 where $ \begin{pmatrix}
 h_1 & 0& 0\\
0 & h_2 &0\\
0  & 0 & h_3 
 \end{pmatrix} \in K$.  Thus  the irreducible submodules  $\fr{m}_{12}$,  $\fr{m}_{13}$ and  $\fr{m}_{23}$  are  mutually non equivalent. 
 
 We now consider the Stiefel manifold $G/H=\SO(k_1+k_2+k_3)/\SO(k_3)$ and we take into account
 the diffeomorphism
 \begin{equation}\label{diff2}
 G/H=(G\times \SO(k_1)\times\SO(k_2))/((\SO(k_1)\times\SO(k_2))\times\SO(k_3))=\widetilde{G}/
 \widetilde{H}
 \end{equation}
 (compare with (\ref{diff1}) where $L=\SO(k_1)\times\SO(k_2)$).
 Let
 \begin{equation}\label{decomp1}
 \fr{p}=\fr{so}(k_1)\oplus\fr{so}(k_2)\oplus  \fr{m}_{12}\oplus  \fr{m}_{13}\oplus  \fr{m}_{23} 
 \end{equation}
 be an $\Ad(\SO(k_1)\times\SO(k_2)\times\SO(k_3))$-invariant decomposition
of the tangent space $\fr{p}$ of $G/H$ at $eH$, where the corresponding
 submodules are non equivalent.
 Then we consider a subset of all $G$-invariant metrics on $G/H$ 
 determined by the $\Ad(\SO(k_1)\times\SO(k_2)\times\SO(k_3))$-invariant scalar products
 on $\fr{p}$ given by
 \begin{equation}\label{metric1}
 \langle \   ,\  \rangle =  x_1 \, (-B) |_{\fr{so}(k_1)}+ x_2 \, (-B) |_{ \fr{so}(k_2)}
 + x_{12} \,  (-B) |_{ \fr{m}_{12}}+ x_{13} \,  (-B) |_{ \fr{m}_{13}} + x_{23} \,  (-B) |_{ \fr{m}_{23}}
 \end{equation}
 for $k_1 \geq 2$, $k_2 \geq 2$ and $k_3 \geq 1$.
 
Let $\fr{h}$ be the Lie algebra of $H$ and we set in the decomposition (\ref{decomp1}) 
$\fr{so}(k_1)=\fr{m}_1,  
\fr{so}(k_2)=\fr{m}_2$. 
Then by using Lemma \ref{brac} we obtain that the following relations hold:
 
 
\begin{center}
\begin{tabular}{ l l l }
$[ \fr{m}_1, \fr{m}_1] = \fr{m}_1,$ & $[ \fr{m}_2, \fr{m}_2] = \fr{m}_2,$ & $[ \fr{m}_1, \fr{m}_{12}] = \fr{m}_{12},$\\ $[ \fr{m}_1, \fr{m}_{13}] = \fr{m}_{13},$ & $[\fr{m}_2, \fr{m}_{12}] =  \fr{m}_{12},$ & $[ \fr{m}_2, \fr{m}_{23}] = \fr{m}_{23},$\\
$[ \fr{m}_{12}, \fr{m}_{12}] \subset \fr{m}_{1}\oplus  \fr{m}_{2},$ & $[ \fr{m}_{13}, \fr{m}_{13}] \subset \fr{m}_{1} \oplus \fr{h},$ & $[\fr{m}_{23}, \fr{m}_{23}] \subset \fr{m}_{2} \oplus \fr{h},$\\
 $[ \fr{m}_{12}, \fr{m}_{23}] \subset \fr{m}_{13},$ & $[ \fr{m}_{13}, \fr{m}_{23}] \subset \fr{m}_{12},$ & $[\fr{m}_{12}, \fr{m}_{13}] \subset \fr{m}_{23}.$ 
\end{tabular}
\end{center}
 
  Thus  we see that the only non zero triplets (up to permutation of indices) are 
$${1 \brack {11}}, \  {2 \brack {22}}, \    {(12) \brack {1(12)}}, \   {(13) \brack {1(13)}},\  
   {(12) \brack {2(12)}}, \    {(23) \brack {2(23)}},  \   {(13) \brack {(12)(23)}}, 
$$
where  $\displaystyle{{i \brack {i i}} }$ is non zero only for $k_1, k_2 \geq 3$. 
\begin{lemma}\label{lemma5.1}
The components  of  the Ricci tensor ${r}$ for the $\Ad(\SO(k_1)\times\SO(k_2)\times\SO(k_3))$-invariant scalar product $ \langle \  \ ,\ \ \rangle $ on $G/H$ defined by  \em{(\ref{metric1})} are given as follows:  
{\small 
\begin{equation*}
\begin{array}{lll} 
r_1 &= & \displaystyle{\frac{1}{2 x_1} +
\frac{1}{4 d_1 } \biggl({1 \brack {11}}\frac{1}{x_{1}} +{1 \brack {(12)(12)}}  \frac{x_1}{{x_{12}}^2}+{1 \brack {(13)(13)}}  \frac{x_1}{{x_{13}}^2} \biggr)} \\ \\ & & 
 \displaystyle{- \frac{1}{2 d_1 } \biggl({1 \brack {11}}\frac{1}{x_{1}} +{(12) \brack {1(12)}}  \frac{1}{{x_{1}}}+{(13) \brack {1(13)}}  \frac{1}{{x_{1}}} \biggr),} 
 \\   \\
 r_2 &= & 
 \displaystyle{\frac{1}{2 x_2} +
\frac{1}{4 d_2 } \biggl({2 \brack {22}}\frac{1}{x_{2}} +{2 \brack {(12)(12)}}  \frac{x_2}{{x_{12}}^2}+{2 \brack {(23)(23)}}  \frac{x_2}{{x_{23}}^2} \biggr)} \\ \\ & & 
 \displaystyle{- \frac{1}{2 d_2 } \biggl({2 \brack {22}}\frac{1}{x_{2}} +{(12) \brack {2(12)}}  \frac{1}{{x_{2}}}+{(23) \brack {2(23)}}  \frac{1}{{x_{2}}} \biggr),} 
 \end{array} 
\end{equation*} }
{\small 
\begin{equation*}
\begin{array}{lll} 
r_{12} &= &  \displaystyle{\frac{1}{ 2 x_{12}} +\frac{1}{4 d_{12}}\biggl({(12) \brack {1 (12)}}  \frac{1}{x_{1}}\times 2 + {(12) \brack {2 (12)}}  \frac{1}{x_{2}} \times2+{(12) \brack {(13) (23)}}   \frac{x_{12}}{x_{13} x_{23} } \times 2 \biggr) } \\  \\ & & 
\displaystyle{-\frac{1}{ 2  d_{12}} \biggl( {1 \brack {(12) (12)}} \frac{x_1}{{x_{12}}^2} + {(12) \brack {(12) 1}} \frac{1}{x_{1}}+ {2 \brack {(12) (12)}}\frac{x_2}{{x_{12}}^2} +{(12) \brack {(12) 2}} \frac{1}{x_{2}} }  \\ \\ & & \displaystyle{+{(13) \brack {(12) (23)}}   \frac{x_{13}}{x_{12} x_{23} }+{(23) \brack {(12) (13)}}   \frac{x_{23}}{x_{12} x_{13} } \biggr) }, 
\\
r_{13}  &= &    \displaystyle{\frac{1}{ 2 x_{13}} +\frac{1}{4 d_{13}}\biggl({(13) \brack {1 (13)}}  \frac{1}{x_{1}}\times 2 + {(13) \brack {2 (13)}}  \frac{1}{x_{2}} \times2+{(13) \brack {(12) (23)}}   \frac{x_{13}}{x_{12} x_{23} } \times 2 \biggr) } \\  \\ & & 
\displaystyle{-\frac{1}{ 2  d_{13}} \biggl( {1 \brack {(13) (13)}} \frac{x_1}{{x_{13}}^2} + {(13) \brack {(13) 1}} \frac{1}{x_{1}} +{(12) \brack {(13) (23)}}   \frac{x_{12}}{x_{13} x_{23} }+{(23) \brack {(13) (12)}}   \frac{x_{23}}{x_{13} x_{12} } \biggr) }, 
\\  \\
r_{23}  &= &    \displaystyle{\frac{1}{ 2 x_{23}} +\frac{1}{4 d_{23}}\biggl({(23) \brack {2 (23)}}  \frac{1}{x_{2}}\times 2 + {(23) \brack {3 (23)}}  \frac{1}{x_{3}} \times2+{(23) \brack {(12) (13)}}   \frac{x_{23}}{x_{12} x_{13} } \times 2 \biggr) } \\  \\ & & 
\displaystyle{-\frac{1}{ 2  d_{23}} \biggl( {2 \brack {(23) (23)}} \frac{x_2}{{x_{23}}^2} + {(23) \brack {(23) 2}} \frac{1}{x_{2}} +{(12) \brack {(23) (13)}}   \frac{x_{12}}{x_{23} x_{13} }+{(13) \brack {(23) (12)}}   \frac{x_{13}}{x_{23} x_{12} } \biggr) }, 
\end{array} 
\end{equation*} }
where $n = k_1+k_2+k_3$. 
\end{lemma}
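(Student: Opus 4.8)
Since the five summands $\fr{m}_1,\fr{m}_2,\fr{m}_{12},\fr{m}_{13},\fr{m}_{23}$ in (\ref{decomp1}) are mutually non equivalent, the Ricci tensor of any metric of the form (\ref{metric1}) stays diagonal in this decomposition and its five components are governed by the general formula of Lemma \ref{ric2}. The plan is to specialize
\[
r_k=\frac{1}{2x_k}+\frac{1}{4d_k}\sum_{i,j}\frac{x_k}{x_jx_i}{k \brack {ji}}-\frac{1}{2d_k}\sum_{i,j}\frac{x_j}{x_kx_i}{j \brack {ki}}
\]
to each index $k\in\{1,2,12,13,23\}$, once it is known which symbols ${k \brack {ij}}$ survive.

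First I would pin down the non zero structure constants. Feeding the bracket relations of the table (obtained from Lemma \ref{brac}) into the symmetry ${k \brack {ij}}={k \brack {ji}}={j \brack {ki}}$ shows that, up to permutation of the three indices, the only possibly non zero triples are the seven already listed,
\[
{1 \brack {11}},\ {2 \brack {22}},\ {(12) \brack {1(12)}},\ {(13) \brack {1(13)}},\ {(12) \brack {2(12)}},\ {(23) \brack {2(23)}},\ {(13) \brack {(12)(23)}}.
\]
Any other triple of modules brackets either to zero or into the isotropy subalgebra $\fr{h}$; the latter does not enter the explicit triple sums of Lemma \ref{ric2}, its contribution being already absorbed into the leading term $\tfrac{1}{2x_k}$ through the $-B$ normalization. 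In particular every symbol appearing in the displayed components that is not among the seven above (for instance ${(13) \brack {2(13)}}$) is identically zero, so the corresponding term simply drops out.

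Next, for each $k$ I would substitute these data into the formula. In the positive (second) sum only the triples carrying $k$ in the top slot contribute, and the monomial $x_k/(x_jx_i)$ is symmetric in $i,j$; hence a surviving triple with two distinct lower indices is counted twice, once for each ordering, which is exactly the origin of the factors $\times 2$ in $r_{12},r_{13},r_{23}$ (and of their absence in $r_1,r_2$, where the two lower indices coincide). In the negative (third) sum only triples carrying $k$ in a lower slot contribute; here the monomial $x_j/(x_kx_i)$ is \emph{not} symmetric, so instead of a doubling one obtains two genuinely different terms from a single underlying constant, according to whether $k$ occupies the top or the remaining lower slot. This is why, for example, both ${1 \brack {(12)(12)}}\tfrac{x_1}{x_{12}^2}$ (here $j=1$) and ${(12) \brack {(12)1}}\tfrac{1}{x_1}$ (here $j=12$) appear in $r_{12}$. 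Reading off the correct monomial in each case and collecting terms for $k=1,2$ reproduces $r_1,r_2$, and for $k=12,13,23$ reproduces $r_{12},r_{13},r_{23}$.

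The computation is otherwise mechanical; the only delicate point, and the place where factor errors are easiest to introduce, is precisely this bookkeeping: assigning each of the seven symbols to the positive or the negative sum for every $k$, attaching the correct ratio of the $x_i$, and distinguishing the symmetric doubling in the positive sum from the two-slot splitting in the negative sum. Keeping the identity ${k \brack {ij}}={j \brack {ki}}$ in view throughout is what organizes this bookkeeping and guarantees that the listed expressions are the ones obtained.
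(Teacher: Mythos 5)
Your proposal is correct and is essentially the paper's own (implicit) argument: the paper, too, proves Lemma~\ref{lemma5.1} simply by substituting into the Park--Sakane formula of Lemma~\ref{ric2} after using the bracket table to isolate the seven non-zero triplets ${1 \brack {11}}, {2 \brack {22}}, {(12) \brack {1(12)}}, {(13) \brack {1(13)}}, {(12) \brack {2(12)}}, {(23) \brack {2(23)}}, {(13) \brack {(12)(23)}}$, with brackets landing in $\fr{h}$ absorbed into the $1/(2x_k)$ term. Your bookkeeping of the symmetric doubling ($\times 2$) in the positive sum versus the two distinct terms per triple in the negative sum, and your remark that vacuous symbols such as ${(13) \brack {2(13)}}$ (and, by the same token, the $\fr{h}$-related symbol ${(23) \brack {3(23)}}$) simply drop out, matches the paper's computation exactly.
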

%
%
%
%
Now we recall some Lemmas from  \cite{ADN1} giving some more details in the proofs. 

\begin{lemma}\label{lemma5.2aa}  Let $\fr{q}$ be a simple subalgebra of $\fr{so}(n)$.  Consider an orthonormal basis $\{ f_j \}$  of $\fr{q}$ 
 with respect to $-B$ {\em({\em negative of the Killing form of $\fr{so}(n)$})} and denote by
 $B_{\fr{q}}$ the Killing form of $\fr{q}$.  Then,  for $i = 1, \ldots, \dim \fr{q}$, we have 
\begin{equation*}\label{eq14a}
\sum_{j, k = 1}^{\dim \fr{q}} \left(-B([f_i, f_j], f_k \right)^2 = {\al}^{\fr{q}}_{\fr{so}(n)},
\end{equation*}
where ${\al}^{\fr{q}}_{\fr{so}(n)}$ is the constant determined by $B_{\fr{q}} = {\al}^{\fr{q}}_{\fr{so}(n)}\cdot B|_{\fr{q}}$. 
\end{lemma}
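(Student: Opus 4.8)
The plan is to exploit the fact that $\fr{q}$ is \emph{simple}, so that any $\Ad(\fr{q})$-invariant symmetric bilinear form on $\fr{q}$ is a scalar multiple of its Killing form $B_{\fr{q}}$. Since the restriction $B|_{\fr{q}}$ of the Killing form of $\fr{so}(n)$ is such an invariant form, this proportionality is exactly what produces the constant via $B_{\fr{q}} = \al^{\fr{q}}_{\fr{so}(n)} \cdot B|_{\fr{q}}$. Evaluating on the $(-B)$-orthonormal basis $\{f_j\}$, where $-B(f_i, f_i) = 1$ and hence $B(f_i,f_i) = -1$, this relation reads $B_{\fr{q}}(f_i, f_i) = -\al^{\fr{q}}_{\fr{so}(n)}$ for every $i$. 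The whole argument then reduces to showing that the left-hand sum computes $-B_{\fr{q}}(f_i,f_i)$, which will simultaneously establish the claimed equality and the independence of $i$.

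First I would use that $\fr{q}$ is a subalgebra, so $[f_i, f_j] \in \fr{q}$ and can be expanded in the orthonormal basis as $[f_i, f_j] = \sum_k c^k_{ij} f_k$ with $c^k_{ij} = -B([f_i, f_j], f_k)$. Thus the quantity to be evaluated is exactly $\sum_{j,k} (c^k_{ij})^2$, the squared Frobenius norm of the matrix of $\ad_{\fr{q}}(f_i)$ in this basis. Next I would write the Killing form of $\fr{q}$ as a trace, $B_{\fr{q}}(f_i, f_i) = \tr\bigl(\ad_{\fr{q}}(f_i)^2\bigr) = \sum_{j,k} c^k_{ij}\, c^j_{ik}$, using that the $(k,j)$-entry of the matrix of $\ad_{\fr{q}}(f_i)$ is $c^k_{ij}$.

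The key step is that $\ad(f_i)$ is skew-adjoint with respect to the invariant inner product $-B$, so in the $(-B)$-orthonormal basis its matrix is antisymmetric, i.e. $c^j_{ik} = -c^k_{ij}$. Substituting this into the trace expression gives $B_{\fr{q}}(f_i,f_i) = -\sum_{j,k}(c^k_{ij})^2$, and comparing with $B_{\fr{q}}(f_i,f_i) = -\al^{\fr{q}}_{\fr{so}(n)}$ yields $\sum_{j,k}\bigl(-B([f_i,f_j],f_k)\bigr)^2 = \al^{\fr{q}}_{\fr{so}(n)}$ for each $i$, as asserted.

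As for the main obstacle, there is no genuine analytic difficulty here; the two points that require care are (i) justifying the proportionality $B_{\fr{q}} = \al^{\fr{q}}_{\fr{so}(n)} \cdot B|_{\fr{q}}$, which rests on the simplicity of $\fr{q}$ together with the $\Ad(\fr{q})$-invariance of $B|_{\fr{q}}$, and (ii) tracking signs, in particular the antisymmetry of the structure-constant matrix that converts $\tr(\ad(f_i)^2)$ into minus a sum of squares. The independence of the result from $i$ then comes for free, since $B_{\fr{q}}(f_i,f_i) = -\al^{\fr{q}}_{\fr{so}(n)}$ holds identically on unit vectors.
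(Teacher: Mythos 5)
Your proof is correct and follows essentially the same route as the paper's: both compute $B_{\fr{q}}(f_i,f_i)=\tr\bigl(\ad(f_i)^2\bigr)$ in the $(-B)$-orthonormal basis, use the $\ad$-invariance (skew-adjointness) of the Killing form to turn this trace into minus the sum of squared structure constants $-B([f_i,f_j],f_k)$, and then compare with the proportionality $B_{\fr{q}}=\al^{\fr{q}}_{\fr{so}(n)}\cdot B|_{\fr{q}}$. The only cosmetic difference is that the paper first proves the polarized identity $-B_{\fr{q}}(X,Y)=\sum_{j,k}\bigl(-B([X,f_j],f_k)\bigr)\bigl(-B([Y,f_j],f_k)\bigr)$ via the rescaled $(-B_{\fr{q}})$-orthonormal basis $\widetilde{f_j}=f_j/\sqrt{\al^{\fr{q}}_{\fr{so}(n)}}$ and then specializes to $X=Y=f_i$, whereas you evaluate the diagonal entry directly using the antisymmetry $c^j_{ik}=-c^k_{ij}$.
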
 
 \begin{proof}
 Note that the vectors $\left\{ \widetilde{f_j} = \left(1/\sqrt{{\al}^{\fr{q}}_{\fr{so}(n)}}\right) f_j \right\}$ form an orthonormal basis of $\fr{q}$ with respect to $-B_{\fr{q}}$. For $X, Y \in \fr{q}$,  we have

\begin{eqnarray*}
-B_{\fr{q}}(X, Y) &=& -\tr (\ad(X) \ad(Y)) = -\sum_{j} ( - B_{\fr{q}}(\ad(X) \ad(Y) \widetilde{f_j} , \widetilde{f_j} ) ) \\
\displaystyle &=& -\sum_{j} (  B_{\fr{q}}( \ad(Y) \widetilde{f_j} , \ad(X) \widetilde{f_j} ) ) = 
  1/{\al}^{\fr{q}}_{\fr{so}(n)}\cdot  \sum_{j} (-B_{\fr{q}}) ( \ad(X) {f_j} ,  \ad(Y) {f_j} )   \\ 
\displaystyle &=& \sum_{j} -B ( \ad(X) {f_j} ,  \ad(Y) {f_j} ) 
 \displaystyle =  \sum_{j,  \, k } (-B( \ad(X) {f_j} ,  {f_k}) ) \cdot (-B( \ad(Y) {f_j},  {f_k} )). 
\end{eqnarray*}

In particular, we  have 
$$ 1=  -B_{\fr{q}}( \widetilde{f_i}, \widetilde{f_i}) =  1/{\al}^{\fr{q}}_{\fr{so}(n)}( -B_{\fr{q}})( {f_i}, {f_i})  = 1/{\al}^{\fr{q}}_{\fr{so}(n)} \sum_{j,\,  k = 1 }^{\dim \fr{q}}  (-B( [ {f_i}, {f_j}] ,  {f_k}) )^2. 
$$ 
 \end{proof}

\begin{lemma}\label{lemma5.20} For $a, b, c = 1, 2, 3$ and $(a - b)(b - c) (c - a) \neq 0$ the following relations hold:
\begin{equation*}\label{eq14}
\begin{array}{lll} 
 \displaystyle{{a \brack {a a}} = \frac{k_a (k_a -1)(k_a -2)}{2 (n -2)} },   &  \displaystyle{{a \brack {(a b) (a b)}} = \frac{k_a  k_b (k_a -1)}{2 (n -2)} }, &  \displaystyle{{(a c) \brack {(a b ) (b c)}} = \frac{k_a  k_b  k_c}{2 (n -2)} }. 
\end{array} 
\end{equation*}
\end{lemma}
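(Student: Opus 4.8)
The plan is to evaluate all three symbols directly from the definition $\displaystyle{ {k \brack {ij}} } = \sum (A^\gamma_{\alpha\beta})^2$, using the explicit basis $e_{ab}=E_{ab}-E_{ba}$ of $\fr{so}(n)$ and the bracket rule of Lemma~\ref{brac}. First I would fix the normalization: since $B(X,Y)=(n-2)\tr XY$ and $(E_{ab}-E_{ba})^2 = -(E_{aa}+E_{bb})$ for $a\neq b$, one gets $-B(e_{ab},e_{ab})=2(n-2)$, so the vectors $\tilde e_{ab}=e_{ab}/\sqrt{2(n-2)}$ form a $-B$-orthonormal basis adapted to the decomposition (\ref{decomp1}). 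The key reformulation is that, because $\{e_\gamma\}$ is orthonormal on $\fr{m}_k$,
\[
{k \brack {ij}} \;=\; \sum_{\tilde e_\alpha \in \fr{m}_i,\ \tilde e_\beta \in \fr{m}_j} \bigl\| \mathrm{pr}_{\fr{m}_k} [\tilde e_\alpha, \tilde e_\beta] \bigr\|_{-B}^2 ,
\]
and by Lemma~\ref{brac} every nonzero bracket of two basis vectors equals $\pm$ a single basis vector scaled by $1/\sqrt{2(n-2)}$, hence has squared $-B$-norm exactly $1/(2(n-2))$. Thus each symbol reduces to counting the ordered pairs $(\alpha,\beta)$ whose bracket has a nonzero component in $\fr{m}_k$, multiplied by $1/(2(n-2))$.

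For the first identity I would apply Lemma~\ref{lemma5.2aa} to the subalgebra $\fr{q}=\fr{m}_a=\fr{so}(k_a)$ (simple for $k_a=3$ or $k_a\ge 5$, and semisimple but still with Killing form proportional to the trace form for $k_a=4$, which is all the proof of Lemma~\ref{lemma5.2aa} uses): summing the identity there over $i=1,\dots,\dim\fr{q}$ gives $\displaystyle{ {a \brack {aa}} } = \dim\fr{so}(k_a)\cdot \al^{\fr{so}(k_a)}_{\fr{so}(n)}$. Here $\dim\fr{so}(k_a)=k_a(k_a-1)/2$, and since the Killing form of $\fr{so}(m)$ equals $(m-2)\tr$ for every $m$, the proportionality constant is $\al^{\fr{so}(k_a)}_{\fr{so}(n)}=(k_a-2)/(n-2)$; the product is $k_a(k_a-1)(k_a-2)/(2(n-2))$. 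This expression is automatically $0$ for $k_a\le 2$, consistent with the remark that these symbols vanish unless $k_a\ge 3$.

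For the remaining two identities I would count directly. Label the three index ranges block~$a$, block~$b$, block~$c$ of sizes $k_a,k_b,k_c$; a basis vector of $\fr{m}_{ab}$ is $e_{ij}$ with $i$ in block~$a$ and $j$ in block~$b$. For $\displaystyle{ {a \brack {(ab)(ab)}} }$, Lemma~\ref{brac} shows that $[e_{ij},e_{kl}]$ (with $i,k$ in block~$a$ and $j,l$ in block~$b$) has a component in $\fr{m}_a=\fr{so}(k_a)$ precisely when the block-$b$ indices coincide, $j=l$, and $i\neq k$, the value being $\mp e_{ik}$; the number of such ordered pairs is $k_b\cdot k_a(k_a-1)$, so the symbol equals $k_a k_b(k_a-1)/(2(n-2))$. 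For $\displaystyle{ {(ac) \brack {(ab)(bc)}} }$, a vector $e_{ij}\in\fr{m}_{ab}$ and a vector $e_{kl}\in\fr{m}_{bc}$ bracket into $\fr{m}_{ac}$ precisely when their shared block-$b$ indices coincide, $j=k$, giving $e_{il}$; there are $k_a k_b k_c$ such ordered pairs, producing $k_a k_b k_c/(2(n-2))$.

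The only delicate point is the combinatorial enumeration in the last step: for a bracket $[e_{ij},e_{kl}]$ of two basis vectors there are a priori two ways the indices can overlap, and one must verify with Lemma~\ref{brac} which overlap feeds the target module $\fr{m}_k$ rather than a different summand or $\fr{h}$, and that distinct pairs are never counted twice. Since such a bracket is always $\pm$ a single basis vector, all signs disappear upon squaring, so once the enumeration is pinned down the arithmetic is immediate; the table of bracket relations $[\fr{m}_i,\fr{m}_j]$ established earlier in this section, together with Lemma~\ref{brac}, is exactly what renders each case unambiguous.
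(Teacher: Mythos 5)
Your proposal is correct, and for two of the three identities it takes a genuinely different route from the paper. For $\displaystyle{a \brack {aa}}$ you argue exactly as the paper does, via Lemma \ref{lemma5.2aa} applied to $\fr{so}(k_a)\subset\fr{so}(n)$ with $\al^{\fr{so}(k_a)}_{\fr{so}(n)}=(k_a-2)/(n-2)$. The paper, however, also derives the two mixed symbols from that lemma: it applies it to the nested chain $\fr{so}(k_a)\subset\fr{so}(k_a+k_b)\subset\fr{so}(k_a+k_b+k_c)=\fr{so}(n)$, uses the bracket-relation table to split the resulting sums into triplet symbols, and subtracts successively — first ${a \brack {aa}}+{(ab) \brack {a(ab)}}=\dim\fr{so}(k_a)\cdot\frac{k_a+k_b-2}{n-2}$, then a six-term identity summing to $\dim\fr{m}_{ab}=k_ak_b$ for the last symbol. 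You instead normalize the basis ($-B(e_{ab},e_{ab})=2(n-2)$, so each nonzero bracket of orthonormal basis vectors has squared norm $1/(2(n-2))$) and count ordered pairs directly from Lemma \ref{brac}; your counts ($k_ak_b(k_a-1)$ pairs feeding $\fr{m}_a$, and $k_ak_bk_c$ pairs feeding $\fr{m}_{ac}$, the only overlap being the shared block-$b$ index) are correct. Your route buys several things: each symbol is computed independently, with no error propagation through the subtraction chain; it does not use the relation $k_a+k_b+k_c=n$, which the paper's third step implicitly invokes by setting the constant for the full algebra equal to $1$; and it sidesteps the simplicity hypothesis of Lemma \ref{lemma5.2aa}, which as stated does not cover $\fr{so}(4)$ (semisimple, not simple) — a case that genuinely occurs in the paper's applications ($k_2=4$ in Case 2 of Theorem \ref{theorem_v5_R^7}, and $k_1+k_3=4$ in Case 1), and which the paper glosses over but your remark that the proof only needs $B_{\fr{q}}=\al^{\fr{q}}_{\fr{so}(n)}B|_{\fr{q}}$ repairs cleanly. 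What the paper's route buys in exchange is independence from the explicit matrix basis and the combinatorial bookkeeping you flag as the delicate point: it needs only the proportionality constants from \cite{DZ} plus the module bracket relations, so it transfers essentially verbatim to other classical groups (e.g.\ the quaternionic Stiefel manifolds of \cite{ADN3}), where direct enumeration would be messier.
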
 
 \begin{proof} For the standard Lie subalgebra $\fr{so}(k) \subset \fr{so}(n)$ we have $\displaystyle  {\al}^{\fr{so}\, (k)}_{\fr{so} \, (n)} = \frac{k-2}{n-2}$ (\cite{DZ}).  Note that for $a =1, 2, 3$ it is $\fr{m}_a = \fr{so}(k_a)$  and  $\displaystyle{{a \brack a a}  =\sum_{i, j, k = 1}^{\dim  \fr{so} \, (k_a)} \left(-B([f_i, f_j], f_k \right)^2 }$. Thus 
 from Lemma \ref{lemma5.2aa} we obtain that 
 $$ \displaystyle{{a \brack {a a}} = {\al}^{\fr{so}\, (k_\al)}_{\fr{so} \, (n)}\dim \fr{so}(k_\al)
 = \frac{k_a (k_a -1)(k_a -2)}{2 (n -2)} }.$$  
 
 For the second relation we consider the Lie subalgebra $\fr{so}(k_a +k_b) \subset \fr{so}(n)$. 
 Recall the relations 
\begin{equation}\label{brakso}
   [ \fr{m}_a,  \fr{m}_a] =  \fr{m}_a, \, \,   [ \fr{m}_a,  \fr{m}_b] =(0),  \, \,  [\fr{m}_a, \fr{m}_{a b}] \subset\fr{m}_{a b},   
\end{equation} 
and,  for $i = 1, \ldots, \dim \fr{m}_a$, Lemma \ref{lemma5.2aa} implies that
\begin{equation*}\label{eq15a}
{\al}^{\fr{so}(k_a+k_b)}_{\fr{so}(n)}=\sum_{j, k = 1}^{\dim \fr{so} \, (k_a+ k_b)} \left(-B([f_i, f_j], f_k \right)^2 =  \frac{k_a+ k_b -2}{n-2}. 
\end{equation*}
Take a basis $\{f_i\}$ of $\fr{m}_a$.  Then by taking into account 
(\ref{brakso}) we obtain that
\begin{equation*}
\begin{array}{lll}
\displaystyle{\sum_{i=1}^{\dim\fr{m}_a}\left(\sum_{j,k=1}^{\dim\fr{so}(k_a+k_b)} \left(-B([f_i, f_j], f_k \right)^2\right) }&=&
\displaystyle{{a \brack {a a}} + 0 + {(a b) \brack {a (a b)}}} 
\end{array}
\end{equation*}
and  thus 
\begin{equation*}
\begin{array}{lll}
\displaystyle{{a \brack {a a}} +  {(a b) \brack {a (a b)}}} &=&\dim\fr{m}_a\cdot  {\al}^{\fr{so}\, (k_a+k_b)}_{\fr{so}\, (n)} = 
\displaystyle{\dim  \fr{so}(k_a) \cdot  \frac{k_a+ k_b -2}{n-2} = \frac{k_a (k_a-1)( k_a+ k_b -2)}{2(n-2)} },
\end{array}
\end{equation*}

hence we obtain that 
$$\displaystyle{ {(a b) \brack {a (a b)}} =  \frac{k_a (k_a-1) k_b  }{2(n-2)} }. 
$$ 
Finally, for the third relation we note that 
$$ [ \fr{m}_{a b},  \fr{m}_a] =  \fr{m}_{a b}, \, \,  [ \fr{m}_{a b},  \fr{m}_b] =  \fr{m}_{a b},  \, \,  [ \fr{m}_{a b},  \fr{m}_{a b}] \subset  \fr{m}_{a}  + \fr{m}_{b}  \, \,  [ \fr{m}_{a b},  \fr{m}_{a c}] \subset  \fr{m}_{b c },\, \,  [ \fr{m}_{a b},  \fr{m}_{b c}] \subset  \fr{m}_{a c}, 
$$
 and, for $i = 1, \ldots, \dim \fr{m}_{a b}$, 
\begin{equation*}\label{eq16a}
\sum_{j, k = 1}^{\dim \fr{so} \, (k_a+ k_b+ k_c)} \left(-B([f_i, f_j], f_k \right)^2 = 1. 
\end{equation*}
Then we see that   
$$\displaystyle{  {(a b) \brack {(a b)  a}} +  {(a b) \brack { (a b) b }} + {a \brack {(a b)  (a b)}} +  {b \brack { (a b) (a b) }}+ {( b c) \brack {(a b)  (a c)}} +  {(a c)  \brack { (a b) (b c) }} = \dim \fr{m}_{a b}  = k_a  k_b. 
}
$$
Thus we obtain that 
$$\displaystyle{ {(a c ) \brack {(a b) (b c)}} =  \frac{k_a k_b  k_c  }{2(n-2)} }. 
$$ 
 \end{proof}
 

By using Lemma \ref{lemma5.20} the components of the Ricci tensor given in Lemma \ref{lemma5.1} are explicitly given as follows:
\begin{lemma}\label{lemma5.4}
The components  of  the Ricci tensor ${r}$ for the invariant metric $ \langle \   ,\  \rangle $ on $G/H$ defined by  \em{(\ref{metric1})} are given as follows:  
\begin{equation}\label{eq18}
\left. 
\small{\begin{array}{lll} 
r_1 &=&  \displaystyle{\frac{k_1-2}{4 (n -2)  x_1} +
\frac{1}{4 (n -2) } \biggl(k_2 \frac{x_1}{{x_{12}}^2}} +k_3 \frac{x_1}{{x_{13}}^2} \biggr), 
 \\   \\
 r_2 &=&  
\displaystyle{\frac{k_2-2}{4 (n -2)  x_2} +
\frac{1}{4 (n -2)} \biggl(k_1 \frac{x_2}{{x_{12}}^2} +k_3 \frac{x_2}{{x_{23}}^2} \biggr),} 
\\  \\
r_{12} &=&   \displaystyle{\frac{1}{ 2 x_{12}} +\frac{k_3}{4 (n -2)}\biggl(\frac{x_{12}}{x_{13} x_{23}} - \frac{x_{13}}{x_{12} x_{23}} - \frac{x_{23}}{x_{12} x_{13}}\biggr) }\\  \\ & &
\displaystyle{-
\frac{1}{4 (n -2)} \biggl( (k_1-1) \frac{x_1}{{x_{12}}^2} + (k_2-1) \frac{x_2}{{x_{12}}^2} \biggr)},
\\  \\
r_{13}  &=&   \displaystyle{\frac{1}{  2 x_{13}} +\frac{k_2}{4 (n -2)}\biggl(\frac{x_{13}}{x_{12} x_{23}} - \frac{x_{12}}{x_{13} x_{23}} - \frac{x_{23}}{x_{12} x_{13}}\biggr) -
\frac{1}{4 (n -2)} \biggl( (k_1-1) \frac{x_1}{{x_{13}}^2}  \biggr)}
\\ \\
r_{23}  &=&   \displaystyle{\frac{1}{ 2 x_{23}} +\frac{k_1}{4 (n -2)}\biggl(\frac{x_{23}}{x_{13} x_{12}} - \frac{x_{13}}{x_{12} x_{23}} - \frac{x_{12}}{x_{23} x_{13}}\biggr) -
\frac{1}{4 (n -2)} \biggl(  (k_2-1) \frac{x_2}{{x_{23}}^2} \biggr)}.   
\end{array} } \right\}
\end{equation}
where $n = k_1+k_2+k_3$. 
\end{lemma}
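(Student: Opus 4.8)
The statement follows by inserting the explicit values of the nonzero triplets computed in Lemma~\ref{lemma5.20} into the general Ricci formulas of Lemma~\ref{lemma5.1} and simplifying, so the plan is essentially careful bookkeeping carried out component by component. First I would record the dimensions of the five summands,
$$
d_1 = \dim\fr{so}(k_1) = \frac{k_1(k_1-1)}{2}, \quad d_2 = \frac{k_2(k_2-1)}{2}, \quad d_{12} = k_1 k_2, \quad d_{13} = k_1 k_3, \quad d_{23} = k_2 k_3,
$$
since these enter through the prefactors $1/(4d_k)$ and $1/(2d_k)$. Then, for each of $r_1, r_2, r_{12}, r_{13}, r_{23}$, I would substitute the three values
$$
{a \brack {aa}} = \frac{k_a(k_a-1)(k_a-2)}{2(n-2)}, \quad {a \brack {(ab)(ab)}} = \frac{k_a k_b(k_a-1)}{2(n-2)}, \quad {(ac) \brack {(ab)(bc)}} = \frac{k_a k_b k_c}{2(n-2)}.
$$

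Before substituting I would use the full symmetry ${k \brack {ij}} = {k \brack {ji}} = {j \brack {ki}}$ to bring every triplet occurring in Lemma~\ref{lemma5.1} into one of these three canonical shapes; for instance ${(12) \brack {1(12)}} = {1 \brack {(12)(12)}}$, and ${(13) \brack {(12)(23)}}$ is of type ${(ac) \brack {(ab)(bc)}}$ with $a=1,b=2,c=3$. Two kinds of terms must be discarded at this stage. The triplet ${(13) \brack {2(13)}} = {2 \brack {(13)(13)}}$ vanishes because $[\fr{m}_{13}, \fr{m}_{13}]\subset\fr{m}_1\oplus\fr{h}$ has no $\fr{m}_2$-component, so that term of $r_{13}$ drops out. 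Likewise the term ${(23) \brack {3(23)}}\,(1/x_3)$ appearing formally in $r_{23}$ has no meaning here: the block $\fr{so}(k_3)=\fr{h}$ is the isotropy algebra, not a summand of $\fr{m}$, so there is no variable $x_3$ and this term is absent.

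With the substitutions in place, the simplification is driven by two cancellations. For the diagonal components the decisive identity is $k_1+k_2+k_3-2 = n-2$: the three triplets in the negative part of $r_1$ share the factor $k_1(k_1-1)/(2(n-2))$ and sum to $\tfrac{k_1(k_1-1)}{2(n-2)}\bigl((k_1-2)+k_2+k_3\bigr) = \tfrac{k_1(k_1-1)}{2} = d_1$, whence that part equals $-\tfrac{1}{2d_1}\cdot d_1\cdot\tfrac{1}{x_1} = -\tfrac{1}{2x_1}$ and cancels the leading $\tfrac{1}{2x_1}$, leaving $\tfrac{k_1-2}{4(n-2)x_1}$; the case $r_2$ is identical under $1\leftrightarrow 2$. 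For the off-diagonal components the $1/x_1$ and $1/x_2$ contributions from the positive $1/(4d_{12})$ part cancel exactly against their counterparts in the negative $1/(2d_{12})$ part (the explicit factors $\times 2$ in Lemma~\ref{lemma5.1} are precisely what makes this work), leaving only the $x_1/x_{12}^2$ and $x_2/x_{12}^2$ terms with coefficients $(k_1-1),(k_2-1)$ and the mixed $x_{12}/x_{13}x_{23}$-type terms with coefficient $k_3$; the same pattern gives $r_{13}$ and $r_{23}$ with the indices permuted.

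The computation carries no conceptual obstacle; the only genuine risk is arithmetic slips. The step most likely to cause trouble is the correct application of the symmetry relations together with the vanishing and absent terms just noted, since a single mis-identified triplet or an overlooked factor of $2$ would destroy the delicate cancellations. I would therefore verify $n-2 = k_1+k_2+k_3-2$ and the pairwise cancellations on each component separately, and, as a consistency check, confirm that every surviving term in~(\ref{eq18}) has the uniform denominator $4(n-2)$, which is forced by the common factor $1/(2(n-2))$ in all three values of Lemma~\ref{lemma5.20}.
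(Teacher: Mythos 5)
Your proposal is correct and takes essentially the same route as the paper, which obtains Lemma \ref{lemma5.4} precisely by substituting the triplet values of Lemma \ref{lemma5.20} into the component formulas of Lemma \ref{lemma5.1} and simplifying. Your bookkeeping is accurate on all the delicate points the paper leaves implicit: the cancellation via $(k_a-2)+k_b+k_c=n-2$ in $r_1,r_2$, the pairwise cancellation of the $1/x_1,1/x_2$ terms in the off-diagonal components due to the explicit factors of $2$, the vanishing of the $\fr{m}_2$-type triplet in $r_{13}$ since $[\fr{m}_{13},\fr{m}_{13}]\subset\fr{m}_1\oplus\fr{h}$, and the absence of any $x_3$ term because $\fr{so}(k_3)=\fr{h}$ is the isotropy algebra rather than a summand of the tangent space.
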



For  $k_1 =1$  we have  the Stiefel manifold $G/H=\SO(1+k_2+k_3)/\SO(k_3)$ with corresponding decomposition 
$$\fr{p} =   \fr{so}(k_2)\oplus  \fr{m}_{12} \oplus  \fr{m}_{13} \oplus  \fr{m}_{23}. 
$$
We consider
$G$-invariant metrics on $G/H$ determined by the  $\Ad(\SO(k_2)\times\SO(k_3))$-invariant scalar products on $\fr{p}$ given by 

 \begin{equation} \label{metric2} 
 \langle \   ,\ \rangle =     x_2 \, (-B) |_{ \fr{so}(k_2)} + x_{12} \,  (-B) |_{ \fr{m}_{12}} + x_{13} \,  (-B) |_{ \fr{m}_{13}} + x_{23} \,  (-B) |_{ \fr{m}_{23}}
\end{equation}

\begin{lemma}\label{ricV4Rn}
The components  of  the Ricci tensor ${r}$ for the invariant metric $ \langle \  ,\  \rangle $ on $G/H$ defined by  \em{(\ref{metric2})}, are given as follows:  
\begin{equation}\label{eq19}
\left. {\small \begin{array}{l} 
 r_2  = 
\displaystyle{\frac{k_2-2}{4 (n -2) x_2} +
\frac{1}{4 (n -2)} \biggl(  \frac{x_2}{{x_{12}}^2} +k_3 \frac{x_2}{{x_{23}}^2} \biggr)},
\\  \\
r_{12}   =  \displaystyle{\frac{1}{2 x_{12}} +\frac{k_3}{4 (n -2)}\biggl(\frac{x_{12}}{x_{13} x_{23}} - \frac{x_{13}}{x_{12} x_{23}} - \frac{x_{23}}{x_{12} x_{13}}\biggr) -
\frac{1}{4 (n -2)} \biggl(  (k_2-1) \frac{x_2}{{x_{12}}^2} \biggr)},
\\  \\
r_{23}   =  \displaystyle{\frac{1}{2 x_{23}} +\frac{1}{4 (n -2)}\biggl(\frac{x_{23}}{x_{13} x_{12}} - \frac{x_{13}}{x_{12} x_{23}} - \frac{x_{12}}{x_{23} x_{13}}\biggr) -
\frac{1}{4 (n -2)} \biggl(  (k_2-1) \frac{x_2}{{x_{23}}^2} \biggr)}, 
\\  \\
r_{13}   =  \displaystyle{\frac{1}{ 2 x_{13}} +\frac{k_2}{4 (n -2)}\biggl(\frac{x_{13}}{x_{12} x_{23}} - \frac{x_{12}}{x_{13} x_{23}} - \frac{x_{23}}{x_{12} x_{13}}\biggr) }. 
\end{array} } \right\} 
\end{equation}
where $n = 1+k_2+k_3$. 
\end{lemma}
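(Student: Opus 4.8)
This is the specialization of Lemma \ref{lemma5.4} to the case $k_1 = 1$, which must be treated separately because the proof of Lemma \ref{lemma5.4} assumed $k_1 \geq 2$; nevertheless the computation runs along exactly the same lines, the only structural change being that the module $\fr{m}_1$ disappears. The plan is first to observe that $\SO(k_1) = \SO(1)$ is trivial, so $\fr{m}_1 = \fr{so}(k_1) = (0)$; hence the reductive complement $\fr{p}$ in \eqref{metric2} consists of only the four summands $\fr{m}_2 = \fr{so}(k_2)$, $\fr{m}_{12}$, $\fr{m}_{13}$, $\fr{m}_{23}$, and there is neither a variable $x_1$ nor a Ricci component $r_1$. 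I would record at the outset the dimensions $d_2 = k_2(k_2-1)/2$, $d_{12} = k_1 k_2 = k_2$, $d_{13} = k_1 k_3 = k_3$, $d_{23} = k_2 k_3$, together with the relation $k_2 + k_3 = n-1$ coming from $n = 1 + k_2 + k_3$.

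Next I would identify the nonzero structure constants. They form a subset of those of the general case, namely ${2 \brack {22}}$, ${(12) \brack {2(12)}}$, ${(23) \brack {2(23)}}$, ${2 \brack {(12)(12)}}$, ${2 \brack {(23)(23)}}$ and ${(13) \brack {(12)(23)}}$ (together with their permutations). Every triplet that would involve $\fr{m}_1$ drops out, which is consistent with Lemma \ref{lemma5.20}: the relevant values ${1 \brack {11}}$, ${1 \brack {(1b)(1b)}}$ and ${(1b) \brack {1(1b)}}$ each carry a factor $k_1(k_1-1)$ or $(k_1-1)$ and hence vanish for $k_1 = 1$. Evaluating the surviving constants by Lemma \ref{lemma5.20} gives ${(12) \brack {2(12)}} = {2 \brack {(12)(12)}} = k_2(k_2-1)/(2(n-2))$, ${(23) \brack {2(23)}} = {2 \brack {(23)(23)}} = k_2 k_3(k_2-1)/(2(n-2))$, ${2 \brack {22}} = k_2(k_2-1)(k_2-2)/(2(n-2))$ and ${(13) \brack {(12)(23)}} = k_2 k_3/(2(n-2))$.

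Finally I would substitute these values and the dimensions into the general formula of Lemma \ref{ric2} --- equivalently into the expressions of Lemma \ref{lemma5.1} restricted to the four surviving modules --- and simplify each of $r_2, r_{12}, r_{13}, r_{23}$. Two cancellations are worth anticipating. In the diagonal component $r_2$ the three subtracted triplets share the common factor $k_2(k_2-1)/(2(n-2))$ and leave the weights $(k_2-2) + 1 + k_3 = n-2$, so the negative block reduces to $1/(2x_2)$ and cancels the leading $1/(2x_2)$ term, producing the stated $r_2$. In each of $r_{12}$ and $r_{23}$ the constant ${(12) \brack {2(12)}}$ (resp. ${(23) \brack {2(23)}}$) occurs in both the positive and the negative block and yields a spurious term proportional to $1/x_2$ that cancels, after which the leading $1/(2x_{12})$ (resp. $1/(2x_{23})$) survives; $r_{13}$ is the simplest, since all terms attached to $\fr{m}_1$ vanish outright. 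Granting these, the formulas \eqref{eq19} follow; equivalently, they are obtained by formally setting $k_1 = 1$ and deleting the $r_1$ row in \eqref{eq18}. The computation presents no genuine obstacle --- it is careful bookkeeping rather than a new argument --- and the one point that actually requires justification is the consistent vanishing of every structure constant attached to the trivial module $\fr{m}_1$, which is precisely what the factors $k_1(k_1-1)$ and $(k_1-1)$ in Lemma \ref{lemma5.20} guarantee.
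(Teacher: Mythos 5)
Your proposal is correct and takes essentially the same route as the paper: the paper states Lemma \ref{ricV4Rn} without a separate proof, treating it as the $k_1=1$ specialization of the computation behind Lemmas \ref{lemma5.1}, \ref{lemma5.20} and \ref{lemma5.4} (formally, setting $k_1=1$ in (\ref{eq18}) and deleting the $r_1$ row), which is exactly your argument. Your bookkeeping checks out --- the triplets attached to $\fr{m}_1=\fr{so}(1)=(0)$ vanish via the factors $(k_1-1)$ and $k_1(k_1-1)$, the weights $(k_2-2)+1+k_3=n-2$ collapse the negative block of $r_2$, and the $1/x_2$ cancellations in $r_{12}$, $r_{23}$ are as you describe.
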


 \section{The Stiefel manifold $V_4\mathbb{R} ^n$}
For the Stiefel manifold $V_4\mathbb{R}^n = \SO(n)/\SO(n-4)$ we let $k_2=3$ and $k_3=n-4$ ($n\ge 6$) and consider  $\Ad( \SO(3)\times\SO(n-4))$-invariant scalar products of the form (\ref{metric2}). 
\begin{theorem}\label{theorem_v4_R^n} 
The Stiefel manifold $V_4\mathbb{R}^n = \SO(n)/\SO(n-4)$ ($n\ge 6$)  admits at least four invariant Einstein metrics. Two of them are Jensen's  metrics and the other two are given by $\Ad( \SO(3)\times\SO(n-4))$-invariant scalar products of the form (\ref{metric2}). 
\end{theorem}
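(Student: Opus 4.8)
The plan is to turn the Einstein condition into a polynomial system coming from Lemma~\ref{ricV4Rn} with $k_2 = 3$ and $k_3 = n-4$, to locate Jensen's metrics as a known factor of the resulting elimination polynomial, and to produce the two new metrics from the complementary factor. A metric of the form (\ref{metric2}) is Einstein if and only if $r_2 = r_{12} = r_{13} = r_{23}$. Since Einstein metrics are determined up to homothety, I would normalize one variable, say $x_{23} = 1$, reducing the problem to the three equations $r_2 - r_{13} = 0$, $r_{12} - r_{13} = 0$, $r_{23} - r_{13} = 0$ in the unknowns $x_2, x_{12}, x_{13}$, with coefficients rational in $n$.

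First I would clear denominators to obtain polynomials $f_1, f_2, f_3 \in \bb{Q}(n)[x_2, x_{12}, x_{13}]$, noting that the cleared factors (monomials in the positive quantities $x_2, x_{12}, x_{13}$) never vanish on an admissible metric, so that no spurious solutions are introduced. Then I would compute a Gr\"obner basis of $(f_1, f_2, f_3)$ for a lexicographic order eliminating $x_2$ and $x_{13}$, obtaining a univariate polynomial $h(x_{12}) \in \bb{Q}(n)[x_{12}]$ whose positive roots, together with the back-substituted values of $x_2$ and $x_{13}$, parametrize the invariant Einstein metrics. Since $n$ is a free parameter, this computation has to be performed symbolically in $n$, which accounts for most of the technical bookkeeping.

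The structural input that makes the count possible is the location of Jensen's metrics. The fibration $\SO(4) \to \SO(n)/\SO(n-4) \to \mathrm{Gr}_4(\bb{R}^n)$ identifies Jensen's two-parameter family with the metrics scaling the fiber $\fr{so}(4) = \fr{so}(3) \oplus \fr{m}_{12}$ uniformly and the horizontal space $\fr{m}_{13} \oplus \fr{m}_{23}$ uniformly, namely the subfamily $x_2 = x_{12}$, $x_{13} = x_{23}$. Substituting these relations into (\ref{eq19}) collapses the system to a single quadratic condition in $x_{12}$ whose two positive solutions are Jensen's metrics; correspondingly $h(x_{12})$ should contain a quadratic Jensen factor, which I would isolate and set aside, leaving a complementary factor $\widetilde{h}(x_{12})$.

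The main obstacle is to prove that $\widetilde{h}(x_{12})$ has at least two positive real roots for \emph{every} $n \ge 6$, each yielding positive $x_2$ and $x_{13}$ and breaking the Jensen symmetry $x_2 = x_{12}$, $x_{13} = x_{23}$. A finite computer check cannot cover infinitely many $n$, so I would instead establish two sign changes of $\widetilde{h}$ uniformly in $n$ via the intermediate value theorem, evaluating $\widetilde{h}$ as $x_{12} \to 0^{+}$, as $x_{12} \to \infty$, and at one or two intermediate values chosen so that their signs are governed by the dominant powers of $n$ in the coefficients. On the same intervals I would check that the reconstructed $x_2, x_{13}$ stay positive and that the symmetry is genuinely broken, so that the new solutions are distinct from Jensen's and from each other. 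Controlling the high $n$-degree of the coefficients of $\widetilde{h}$ precisely enough to fix these signs for all $n \ge 6$ simultaneously is the crux of the argument.
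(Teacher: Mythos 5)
Your proposal takes essentially the same route as the paper's proof: the authors likewise reduce the Einstein condition from Lemma~\ref{ricV4Rn} (with $k_2=3$, $k_3=n-4$, normalization $x_{23}=1$) to a polynomial system, use a lexicographic Gr\"obner basis to obtain an elimination polynomial factoring as $(x_{13}-1)\,h_1(x_{13})$ with the Jensen metrics coming from the linear factor, and get the two new metrics from the sign changes $h_1(0)>0$, $h_1(1)<0$, $h_1(2)>0$, valid uniformly for all $n\ge 6$. The only implementation-level differences are that the paper eliminates to $x_{13}$ (so the Jensen factor is linear rather than your expected quadratic in $x_{12}$), removes it by saturating the ideal with $x_{13}-1$ instead of polynomial division, and certifies positivity of the back-substituted $x_{12}$ and $x_2$ via the sign-alternation of the coefficients of auxiliary univariate polynomials $h_2(x_{12})$ and $h_3(x_2)$.
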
 

 \begin{proof} 
 From Lemma \ref{ricV4Rn} we see that  the components  of  the Ricci tensor ${r}$ for such an invariant metric are given by 

 \begin{equation}\label{eq21}
\left. {\small \begin{array}{l} 
 r_2  = 
\displaystyle{\frac{1}{4(n-2) x_2} +
\frac{1}{4(n-2)} \biggl(  \frac{x_2}{{x_{12}}^2} +(n-4) \frac{x_2}{{x_{23}}^2} \biggr)},
\\  \\
r_{12}   =  \displaystyle{\frac{1}{2 x_{12}} +\frac{n-4}{4(n-2)}\biggl(\frac{x_{12}}{x_{13} x_{23}} - \frac{x_{13}}{x_{12} x_{23}} - \frac{x_{23}}{x_{12} x_{13}}\biggr) -
\frac{1}{2(n-2)}  \frac{x_2}{{x_{12}}^2}},
\\  \\
r_{23}   =  \displaystyle{\frac{1}{2 x_{23}} +\frac{1}{4(n-2)}\biggl(\frac{x_{23}}{x_{13} x_{12}} - \frac{x_{13}}{x_{12} x_{23}} - \frac{x_{12}}{x_{23} x_{13}}\biggr) -
\frac{1}{2(n-2)} \frac{x_2}{{x_{23}}^2}},
\\  \\
r_{13}   =  \displaystyle{\frac{1}{ 2 x_{13}} +\frac{3}{4(n-2)}\biggl(\frac{x_{13}}{x_{12} x_{23}} - \frac{x_{12}}{x_{13} x_{23}} - \frac{x_{23}}{x_{12} x_{13}}\biggr) }.
\end{array} } \right\} 
\end{equation}
 We consider the system of equations    
 \begin{equation}\label{eq21a} 
 r_2 =  r_{12}, \,  r_{12} = r_{13}, \,  r_{13} = r_{23}. 
 \end{equation}

Then finding Einstein metrics of the form (\ref{metric2})  reduces  to finding positive solutions of  system (\ref{eq21a}),  and  we normalize  our equations by putting $x_{23}=1$. Then we have the system of equations: 
 \begin{equation}\label{eq21b}
\left. { \begin{array}{lll}
f_1&  = & -(n-4) {x_{12}}^3 x_{2}+(n-4) {x_{12}}^2
   {x_{13}} x_{2}^2+(n-4) {x_{12}} {x_{13}}^2
   x_{2} \\
   & & -2 (n-2) {x_{12}} {x_{13}} x_{2} +(n-4)
   {x_{12}} x_{2}+{x_{12}}^2 {x_{13}}+3 {x_{13}}
   x_{2}^2=0, \\
f_2 &= &(n-3) {x_{12}}^3-2 (n-2) {x_{12}}^2
   {x_{13}}-(n-5) {x_{12}} {x_{13}}^2 \\
   & & +2 (n-2)
   {x_{12}} {x_{13}}+(3-n) {x_{12}}+2 {x_{12}}^2
   {x_{13}} x_{2}-2 {x_{13}} x_{2}=0, \\
f_3& =& (n-2)
   {x_{12}} {x_{13}}-(n-2)
   {x_{12}}+{x_{12}}^2-{x_{12}} {x_{13}} x_{2}-2
   {x_{13}}^2+2 =0. 
\end{array} } \right\} 
\end{equation}
 We consider a polynomial ring $R= {\mathbb Q}[z,  x_2, x_{12}, x_{13}] $ and an ideal $I$ generated by 
$\{ f_1, \, f_2, \, f_3, $  $  \,z  \, x_2 \, x_{12} \, x_{13} -1\}  
$  to find non-zero solutions of equations (\ref{eq21b}). 
We take a lexicographic order $>$  with $ z >   x_2 > x_{12} > x_{13}$ for a monomial ordering on $R$. Then, by the aid of computer, we see that a  Gr\"obner basis for the ideal $I$ contains the  polynomial
$(x_{13} - 1) \, h_{1}(x_{13}),$
where $h_{1}(x_{13})$ is a polynomial of   $x_{13}$  given by 
{ \small 
 \begin{equation*}
\begin{array}{l}
h_{1}(x_{13})   =  (n-1)^3 (5 n-11)^2 \left(n^3-10 n^2+33 n-35\right)
   \left(n^3-6 n^2+9 n-3\right) {x_{13}}^{10} \\
    -2 (n-1)^2 (5 n-11) \left(17
   n^8-356 n^7+3221 n^6-16396 n^5+51159 n^4-99720 n^3 \right. \\
  \left.  +117862
   n^2-76568 n+20649\right) {x_{13}}^9+(n-1) \left(4
   n^{11}+389 n^{10}-11430 n^9+136940 n^8 \right. \\
 \left.   -946084 n^7+4220820
   n^6-12735744 n^5+26330445 n^4-36830352 n^3+33361745
   n^2 \right. \\
   \left. -17678114 n+4164053\right) {x_{13}}^8 -4
   \left(8 n^{12}-38 n^{11}-2320 n^{10}+43360 n^9-379590
   n^8 \right. 
   \\
  \left. 
   +2055155 n^7-7507061 n^6+19112638 n^5-34063584
   n^4+41706995 n^3-33417851 n^2 \right.
  \end{array} 
\end{equation*} }
{\small 
\begin{equation*}
\begin{array}{lll} 
     \left. +15765962 n-3316050\right)
   {x_{13}}^7+\left(112 n^{12}-2718 n^{11}+27906
   n^{10}-149523 n^9+354855 n^8 \right. \\
  \left. +588726 n^7-7694150
   n^6+29295831 n^5-65164167 n^4+92342878 n^3-82220114
   n^2 \right. \\
  \left. +41992646 n-9373722\right) {x_{13}}^6-2 \left(112
   n^{12}-3338 n^{11}+45506 n^{10}-376557 n^9 \right. \\
  \left. +2113393
   n^8-8496684 n^7+25132832 n^6-55172371 n^5+89317711
   n^4-104159676 n^3 \right. \\
  \left. +83190848 n^2-40884390 n+9337014\right)
   {x_{13}}^5+\left(280 n^{12}-8710 n^{11}+123662
   n^{10}  \right. \\
  \left. -1060617 n^9+6124653 n^8-25086974 n^7+74662934
   n^6-162341127 n^5+255246159 n^4 \right. \\
  \left. -282268554 n^3+208035522
   n^2-91729890 n+18337990\right) {x_{13}}^4-4 \left(56
   n^{12}-1710 n^{11}  \right. \\
  \left. +23600 n^{10}-194131 n^9+1056185
   n^8-3982619 n^7+10582237 n^6-19666327 n^5 \right. \\
  \left. +24629929
   n^4-18903391 n^3+6556083 n^2+985682 n-1096346\right)
   {x_{13}}^3 \\
  +(n-1)
   \left(112 n^{11}-3115 n^{10}+38156 n^9-268869 n^8   +1189262
   n^7-3348224 n^6\right. \\
 \left.+5627178 n^5 -3967104 n^4-3831854
   n^3+11143963 n^2  -9643014 n+3094229\right) {x_{13}}^2   \\
-2 (n-5) (n-3) (n-1)^2 (n+1) \left(16 n^7-275
   n^6+1868 n^5-6039 n^4+7372 n^3 +7943 n^2 \right. \\
 \left.-31120
   n+23163\right) {x_{13}} +(n-5)^2
   (n-3)^2 (n-1)^3 (n+1)^2 \left(4 n^3-23 n^2-10
   n+161\right). 
\end{array}  \end{equation*} }
If $ x_{13} =1$ we see that $ f_3 = x_{12}( x_{12} - x_2) =0$ and thus the system of equations (\ref{eq21b}) reduces to the system of  equations 
$$ x_{12} = x_2, \quad (n-1) {x_2}^2 - 2 (n-2){x_2} + 2 =0. $$
Thus we obtain  two solutions for the system  (\ref{eq21b}), namely  
$$x_{12} = x_2 = (n-2-\sqrt{n^2-6 n+6})/(n-1),  \, \, x_{13} = x_{23} =1 $$
and
$$
x_{12} = x_2 = (n-2+\sqrt{n^2-6 n+6})/(n-1),  \, \, x_{13} = x_{23} =1. 
$$
These are known as Jensen's Einstein metrics on Stiefel manifolds.

If $ x_{13} \ne 1$ then $ h_{1}(x_{13}) = 0$ and we claim that the equation  $ h_{1}(x_{13}) = 0$ has at least two positive roots. 
Firstly we consider the value $ h_{1}(x_{13})$ at $ x_{13} =1$. We have 
{\small  \begin{equation*}
\begin{array}{l}
h_{1}(1)   = -800 \left(6 n^5-88 n^4+476 n^3-1175 n^2+1274 n-490\right) \\
= -800 \left(6 (n-6)^5+92 (n-6)^4+524 (n-6)^3+1345 (n-6)^2+1430
   (n-6)+278 \right). 
\end{array}  \end{equation*} }
Then we see that $h_{1}(1) < 0$ for $ n \geq 6$. 

Secondly we consider the value  $ h_{1}(x_{13})$ at $ x_{13} =0$. Then  we have  
{\small  \begin{equation*}
\begin{array}{l}
h_{1}(0)   = (n-5)^2 (n-3)^2 (n-1)^3 (n+1)^2 \left(4 n^3-23 n^2-10 n+161\right)
\\
=  (n-5)^2 (n-3)^2 (n-1)^3 (n+1)^2 \left(4 (n-5)^3+37 (n-5)^2+60 (n-5)+36\right). 
\end{array}  \end{equation*} }

Thirdly we consider the value $ h_{1}(x_{13})$ at $ x_{13} =2$.  We have  
{\small  \begin{equation*}
\begin{array}{l}
h_{1}(2)   = (n-5) \left(4 n^{11}+313 n^{10}-2902 n^9-11175 n^8+334728 n^7-2555222 n^6 \right. \\
\left. +10151316 n^5-22397134 n^4+25374596 n^3-8599331 n^2-8372942  n+6279733\right)
\\
= (n-5)\left( 4 (n-5)^{11}+533 (n-5)^{10}+18248 (n-5)^9+292860 (n-5)^8 \right. \\
+2795928 (n-5)^7+17723008 (n-5)^6+77831856 (n-5)^5+236200016 (n-5)^4 \\
\left.  +477068416 (n-5)^3+593616384 (n-5)^2+389736448 (n-5)+89941248 \right). 
\end{array}  \end{equation*} }
Thus we see that $h_{1}(0) >0$ and $h_{1}(2) > 0$ for $ n \geq 6$.  Hence, we obtain two solutions $ x_{13}= \alpha_{13}, \beta_{13}$ of the equation  $ h_{1}(x_{13}) = 0$ between $ 0 < \alpha_{13} < 1$ and  $1 < \beta_{13} < 2$. 

 We consider a polynomial ring $R= {\mathbb Q}[z,  x_2, x_{12}, x_{13}] $ and an ideal $J$ generated by 
$\{ f_1, \, f_2, \, f_3, $  $  \,z  \, x_2 \, x_{12} \, x_{13} \,( x_{13} - 1)  -1\}  
$ and  take a lexicographic order $>$  with $ z >   x_2 > x_{12} > x_{13}$ for a monomial ordering on $R$. Then, by the aid of computer, we see that a  Gr\"obner basis for the ideal $J$ contains the  polynomials
$ h_{1}(x_{13}) $ and 
 $$ 8 (n-3) (n-2)^3 (n-1)^2 a(n) x_{12} - w_{12}(x_{13}).$$
 Also, for the same ideal $J$  and the 
 lexicographic order $>$  with $ z >   x_{12} > x_2 > x_{13}$ for monomials on $R$,
 we see that a Gr\"obner basis for $J$ contains the polynomial
 $$8 (n-5) (n-2)^3 (n-1)^3 (n+1) \left(4 n^3-23 n^2-10 n+161\right)  a(n) x_2 -w_2 (x_{13}),$$
 where $a(n)$ is a polynomial of $n$ of degree 43 with integer  coefficients and  $ w_{12}(x_{13})$,  $ w_{2}(x_{13})$ are polynomials of $x_{13}$  (and $n$) with integer  coefficients. It is easy to check that $ a(n)  > 0$ for $ n \geq 6$. Thus for the positive values $ x_{13}= \alpha_{13}, \beta_{13}$ found above  we obtain real values $ x_{2}= \alpha_{2}, \beta_{2}$ and 
 $ x_{12}= \alpha_{12}, \beta_{12}$ as solutions of  the system of equations (\ref{eq21b}).
 
 We claim that $ \alpha_{2}, \beta_{2}, \alpha_{12}, \beta_{12}$ are positive.  We consider the ideal $J$ generated by 
$\{ f_1, \, f_2, \, f_3, $  $  \,z  \, x_2 \, x_{12} \, x_{13} \,( x_{13} - 1)  -1\}$ and now  take  a lexicographic order $>$  with $ z >   x_2 > x_{13} > x_{12}$ for a monomial ordering on $R$.  Then, by the aid of computer, we see that a  Gr\"obner basis for the ideal $J$ contains the  polynomial $h_{2}(x_{12})$,  where 
 the polynomial  $h_{2}(x_{12})$ can be written as 
{\small  \begin{equation*} 
\begin{array}{l}
h_{2}(x_{12})   = \left((n-6)^3+8
   (n-6)^2+21 (n-6)+19\right) \\  \left((n-6)^3+12
   (n-6)^2+45 (n-6)+51\right) (n-1)^3 {x_{12}}^{10} \\
    -2 \left((n-6)^6+28  (n-6)^5+294 (n-6)^4+1534 (n-6)^3+4277 (n-6)^2 \right. \\  \left. +6122
   (n-6)+3549\right) (n-2) (n-1)^2 {x_{12}}^9 \\ 
   + 2 (n-1)  \left(20 (n-6)^7+549 (n-6)^6+6340 (n-6)^5+39979
   (n-6)^4  \right. \\  \left. +148820 (n-6)^3+327348 (n-6)^2+394439
   (n-6)+201193\right) {x_{12}}^8 
  \\ 
    -4 (n-2) \left(2 (n-6)^7+139 (n-6)^6+2554
   (n-6)^5+22040 (n-6)^4+104177 (n-6)^3  \right. \\  \left.+278253
   (n-6)^2 +395233 (n-6)+232722\right) {x_{12}}^7 
  \\ 
   +\left(145 (n-6)^7+5403 (n-6)^6+77859 (n-6)^5+586017
   (n-6)^4+2536504 (n-6)^3  \right. \\  \left. +6381392 (n-6)^2+8697776
   (n-6)+4977456\right) {x_{12}}^6 \\ 
    -2 (n-2) \left(3 (n-6)^6+616
   (n-6)^5+12631 (n-6)^4+103314 (n-6)^3   \right. \\  \left.+412576
   (n-6)^2+804920 (n-6)+615288\right) 
   {x_{12}}^5 \\
   +2 \left(53 (n-6)^6+3344 (n-6)^5+52675
   (n-6)^4+372392 (n-6)^3   \right. \\  \left.+1350984 (n-6)^2+2463232
   (n-6)+1792224\right) {x_{12}}^4 \\ 
   -16  (n-2) \left(46 (n-6)^4+1212
   (n-6)^3+9261 (n-6)^2+28030 (n-6)+29688\right)
   {x_{12}}^3 \\
  +(64 \left(39
   (n-6)^4+683 (n-6)^3+4270 (n-6)^2+11459
   (n-6)+11239\right) {x_{12}}^2   \\
 -512 (n-2) \left(8
   (n-6)^2+53 (n-6)+89\right)  {x_{12}} 
   +640 \left(4 (n-6)^2+24 (n-6)+37\right). 
\end{array}  
\end{equation*} }
Then  we see that, for $n \geq 6$, the coefficients of the polynomial  $h_{2}(x_{12})$ are positive  for  even degree terms and negative for odd degree terms. Thus  if  the equation $h_{2}(x_{12})  = 0$ has real solutions, then these are all  positive. In particular, $ x_{12}= \alpha_{12}, \beta_{12}$ are positive. 

Now we  take  a lexicographic order $>$  with $ z >   x_{12} > x_{13} > x_{2}$ for a monomial ordering on $R$.  Then, by the aid of computer, we see that a  Gr\"obner basis for the ideal $J$ contains the  polynomial $h_{3}(x_{2})$,  where 
 the polynomial  $h_{3}(x_{2})$ can be written as 
{\small 
 \begin{equation*}
\begin{array}{l}
h_{3}(x_{2})   = \left(4 (n-6)^3+49 (n-6)^2+146
   (n-6)+137\right) (n-1)^3 (5 n-11)^2 {x_{2}}^{10} \\
    -2 \left(4 (n-6)^5+148 (n-6)^4+1913 (n-6)^3+9717
   (n-6)^2+20631 (n-6)   \right.
   \\ \left. +15971\right) (n-2) (n-1)^2 (5
   n-11){x_{2}}^9 
   + 2 \left(2 (n-6)^9+284
   (n-6)^8+9007 (n-6)^7   \right. 
   \\ +139501 (n-6)^6  +1247253
   (n-6)^5  +6834591 (n-6)^4   +23266933 (n-6)^3 \\  \left. +47940391
   (n-6)^2+54761765 (n-6)+26704721\right) (n-1) {x_{2}}^8 
      \end{array} 
\end{equation*} }
{\small 
\begin{equation*}
\begin{array}{lll} 
 -4 \left(6 (n-6)^9+646 (n-6)^8+19688 (n-6)^7+289164
   (n-6)^6   \right. \\  \left.+2441215 (n-6)^5 +12719748 (n-6)^4+41647456
   (n-6)^3+83518006 (n-6)^2   \right. 
   \\  \left.+93810211
   (n-6)+45323972\right) (n-2) {x_{2}}^7 
\\   +\left(36 (n-6)^{10}+4609
   (n-6)^9+174677 (n-6)^8+3042442 (n-6)^7  \right. \\  \left.+30064190
   (n-6)^6+185689733 (n-6)^5+747015337
   (n-6)^4+1965616896 (n-6)^3   \right. \\  \left.+3272524240
   (n-6)^2+3136179408 (n-6)+1321501424\right) {x_{2}}^6 
 \\    -2 \left(1171 (n-6)^8+67984 (n-6)^7+1234216 (n-6)^6+11204886
   (n-6)^5  \right. 
\\
  \left.+59121913 (n-6)^4+190739546
   (n-6)^3+372812292 (n-6)^2  \right. 
 \\  \left.+407205824
   (n-6)+191474440\right) (n-2) 
   {x_{2}}^5 
 \\
   +2 \left(20229 (n-6)^8+596062 (n-6)^7+7566600 (n-6)^6+54209786
   (n-6)^5  \right. \\  \left.+240333483 (n-6)^4+676552424
   (n-6)^3+1182967080 (n-6)^2  \right. \\  \left.+1176294720
   (n-6)+509895520\right) {x_{2}}^4 \\ 
  -16 \left(4852 (n-6)^6+95667 (n-6)^5+777487
   (n-6)^4+3345377 (n-6)^3  \right. \\  \left.+8058895 (n-6)^2+10324814
   (n-6)+5503940\right) (n-2)
   {x_{2}}^3 
 \\
  +64 \left(917
   (n-6)^6+17831 (n-6)^5+143658 (n-6)^4+614607
   (n-6)^3  \right. \\  \left.+1474263 (n-6)^2+1881545 (n-6)+998839\right) {x_{2}}^2   \\
 -128
   \left(156 (n-6)^3+1344 (n-6)^2+3901
   (n-6)+3804\right) (n-3) (n-2) {x_{2}} \\
 +   640 \left(4 (n-6)^2+24 (n-6)+37\right) (n-3)^2. 
\end{array}  \end{equation*} }
Then  we see that, for $n \geq 6$, the coefficients of the polynomial  $h_{3}(x_{2})$ are positive  for  even degree terms and negative for odd degree terms. Thus  if  the equation $h_{3}(x_{2})  = 0$ has real solutions, then these are all  positive. In particular, the solutions $ x_{2}= \alpha_{2}, \beta_{2}$ are positive, thus we get our claim. 
Notice that the positive  solutions $ \{ x_{2} = \alpha_{2}, x_{12} = \alpha_{12}, x_{13} = \alpha_{13}, x_{23} = 1 \}$, 
$ \{ x_{2} = \beta_{2}, x_{12} = \beta_{12}, x_{13} = \beta_{13}, x_{23} = 1 \}$ satisfy $ \alpha_{13}, \beta_{13} \neq 1$.
Thus these solutions are different from   the Jensen's Einstein metrics on Stiefel manifolds. 
 \end{proof}

We can give  approximate values  of the invariant Einstein metrics on $V_4\bb R^n$: 

  \smallskip
  
 {\small 
 \begin{tabular}{|l|l|l|}
 \hline    
      scalar products    & invariant Einstein metrics    \\  
             \hline    
   $\Ad( \SO(3)\times\SO(n-4))$   &   $   x_{12} = x_2 = (n-2-\sqrt{n^2-6 n+6})/(n-1),  \, \, x_{13} = x_{23} =1$   \\ 
 -invariant scalar products  & $x_{12} = x_2 = (n-2+\sqrt{n^2-6 n+6})/(n-1),  \, \, x_{13} = x_{23} =1$  \\ 
  of the form (\ref{metric2})  &    ( Jensen's Einstein metrics ) \\
  & ----------------------------------------------------------------------- \\
    &  $(x_{23}, x_{13}, x_{12}, x_2)  =  (1, \alpha_{13}, \,\alpha_{12}, \, \alpha_{2})$ \\
       &  $(x_{23}, x_{13}, x_{12}, x_2)   =  ( 1, \beta_{13}, \, \beta_{12}, \, \beta_{2})$   \\
  \hline
 \end{tabular} }
 
 \medskip
 
 We see that for $ n \geq 9$,  we obtain $$1-2/n -6/n^2 <  \alpha_{13}  < 1-2/n -7/(2  n^2), \, \, 1 + 50/(63 n^2)  <  \beta_{13}  < 1 + 3/n^2, $$ 
 for $n \geq 7$, we obtain 
$$2 -2/n -6/n^2 <  \alpha_{12}  < 2 - 4/n - 31/(4 n^2),  \, \, 5/(3 n) +815/(162 n^2) <  \beta_{12}  < 5/(3 n) +10/( n^2)$$ 
and, for  $n \geq 16$, we obtain 
$$1 /(2 n) +  13/ (8 n^2) <  \alpha_{2}  <1 /(2 n) +  11/ (5 n^2),  \, \,  5/(9 n) +23/(20 n^2) <  \beta_{2}  < 5/(9 n) +10 / n^2.$$


\section{The Stiefel manifold $V_5\mathbb{R} ^7$}
For the Stiefel manifold $V_5\mathbb{R} ^7 = \SO(7)/\SO(2)$ we let $k_1=2, k_2=3, k_3=2$ and
 consider  $\Ad(\SO(2)\times\SO(3)\times\SO(2))$-invariant scalar products of the form (\ref{metric1}) and
 for $k_1=1, k_2=4, k_3=2$ we consider $\Ad( \SO(4)\times\SO(2))$-invariant scalar products of the form (\ref{metric2}).


\begin{theorem}\label{theorem_v5_R^7} 
The Stiefel manifold $V_5\mathbb{R}^7 = \SO(7)/\SO(2)$  admits at least six invariant Einstein metrics. Two of them are Jensen's Einstein metrics,    the other two are given by $\Ad(\SO(2)\times\SO(3)\times\SO(2))$-invariant scalar products of the form (\ref{metric1}) and the rest two are given by $\Ad( \SO(4)\times\SO(2))$-invariant scalar products of the form (\ref{metric2}). 
\end{theorem}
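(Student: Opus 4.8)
The plan is to analyze the two $\Ad(K)$-invariant families of metrics separately, mirroring the proof of Theorem~\ref{theorem_v4_R^n}, and then to assemble the resulting Einstein metrics while checking they are pairwise distinct. For the first family I would substitute $k_1=2$, $k_2=3$, $k_3=2$ (so $n=7$) into the Ricci components of Lemma~\ref{lemma5.4}; note that $k_1=2$ makes the term $(k_1-2)/(4(n-2)x_1)$ vanish, simplifying $r_1$. I would then form the Einstein system $r_1=r_2=r_{12}=r_{13}=r_{23}$ (four independent equations), clear denominators to obtain polynomials, and normalize by setting $x_{23}=1$, leaving the unknowns $x_1,x_2,x_{12},x_{13}$. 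Adjoining an auxiliary variable $z$ with $z\,x_1x_2x_{12}x_{13}=1$ to discard solutions lying on a coordinate hyperplane, I would compute a lexicographic Gr\"obner basis of the ideal. As in Theorem~\ref{theorem_v4_R^n}, I expect this basis to contain a univariate polynomial in one chosen variable, together with generators expressing the remaining variables (up to a nonzero leading coefficient, whose positivity at $n=7$ must be checked) as polynomials in it.

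For the second family I would substitute $k_1=1$, $k_2=4$, $k_3=2$, $n=7$ into Lemma~\ref{ricV4Rn}; this is the same setup as Theorem~\ref{theorem_v4_R^n} but with $k_2=4$ in place of $k_2=3$. Normalizing $x_{23}=1$ and repeating the Gr\"obner basis computation, I again expect a factorization isolating Jensen's metrics (the locus $x_{13}=1$, where the system collapses to a quadratic exactly as in Theorem~\ref{theorem_v4_R^n}) from a univariate polynomial $\tilde h(x_{13})$ governing the remaining solutions.

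Because $n=7$ is fixed, all these univariate polynomials have explicit integer coefficients, so the counting is concrete. I would count their positive roots by exhibiting sign changes at a few rational points and invoking the intermediate value theorem, exactly as the values $h_1(0),h_1(1),h_1(2)$ were used in Theorem~\ref{theorem_v4_R^n} (a Sturm sequence is available as a fallback). To force the associated values of the other parameters to be positive, I would display, from further Gr\"obner bases taken under different variable orderings, univariate polynomials $h(x_{12})$, $h(x_2)$, and (for the first family) $h(x_1)$ whose coefficients alternate in sign; such a polynomial is strictly positive for every $x\le 0$, so each of its real roots is automatically positive. This reproduces the mechanism used for $h_2$ and $h_3$ in the previous theorem.

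The genuinely delicate step, and the one I expect to be the main obstacle, is not the existence count but the verification that the six metrics are distinct points of $\mathcal{M}^{\SO(7)}$: the two families describe invariant metrics on the \emph{same} homogeneous space $\SO(7)/\SO(2)$, so a priori a metric from one reduction could coincide with a Jensen metric or with a metric from the other reduction. Jensen's two metrics are separated from all new ones because they satisfy $x_{13}=1$ whereas the new solutions satisfy $x_{13}\neq 1$. To separate the two new metrics of the first family from the two of the second, I would compute sufficiently tight rational enclosures of their parameter tuples from the univariate polynomials and check that the six tuples are pairwise distinct. Summing the contributions --- two Jensen metrics, two of the form~(\ref{metric1}) and two of the form~(\ref{metric2}) --- then yields at least six invariant Einstein metrics on $V_5\bb{R}^7$.
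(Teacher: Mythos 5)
Your proposal follows essentially the same route as the paper: for both reductions ($k_1=2,k_2=3,k_3=2$ via Lemma \ref{lemma5.4} and $k_1=1,k_2=4,k_3=2$ via Lemma \ref{ricV4Rn}) the paper normalizes $x_{23}=1$, adjoins $z\,x_1x_2x_{12}x_{13}-1$ (resp.\ $z\,x_2x_{12}x_{13}-1$), computes a lexicographic Gr\"obner basis containing $(x_{13}-1)h(x_{13})$ so that the locus $x_{13}=1$ yields Jensen's metrics, and finds two positive roots of each univariate factor together with polynomials $x_{12}-w_{12}(x_{13})$, $x_1-w_1(x_{13})$, $x_2-w_2(x_{13})$ determining the remaining coordinates. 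The only differences are matters of certification rather than method --- the paper solves $h_1$ (degree 22) and $h_2$ (degree 10) numerically at $n=7$ instead of using your sign-change/alternating-coefficient arguments (which it reserves for the parametric case of Theorem \ref{theorem_v4_R^n}), and it treats distinctness of the six metrics only implicitly through the tabulated approximate values, whereas you flag it explicitly.
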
 
{\small 
\begin{center}
 \begin{tabular}{|l|l|l|}
 \hline
      scalar products    & approximate values    \\
                \thickline
   $\Ad(\SO(2)\times\SO(3)\times\SO(2))$  &  $(x_{23}, x_{13}, x_{12}, x_1, x_2)  = \left(1,  1,  ( 5+ \sqrt{7})/6, ( 5+ \sqrt{7})/6,  ( 5+ \sqrt{7})/6 \right)$  \\  -invariant scalar products  & $(x_{23}, x_{13}, x_{12}, x_1, x_2)  = \left(1, 1,  ( 5- \sqrt{7})/6, ( 5- \sqrt{7})/6,  ( 5- \sqrt{7})/6 \right)$ \\ of the form (\ref{metric1}) 
  &  ( Jensen's Einstein metrics )  \\ 
& ------------------------------------------------------------------ \\ 
  & $(x_{23}, x_{13}, x_{12}, x_1,  x_2)  \approx (1, 1.13934, \,0.620201, \, 0.831771, \, 0.149407 ) $ \\ & $ (x_{23}, x_{13}, x_{12},  x_1,  x_2)  \approx ( 1, 0.350124, \, 1.03223, \, 0.455639, \, 0.121264)$ \\ 
    \hline 
   $\Ad( \SO(4)\times\SO(2))$   &  $(x_{23}, x_{13}, x_{12}, x_2)  = \left(1,  1,  ( 5+ \sqrt{7})/6, ( 5+ \sqrt{7})/6 \right)$   \\ 
 -invariant scalar products  & $(x_{23}, x_{13}, x_{12}, x_2)  = \left( 1, 1,  ( 5- \sqrt{7})/6, ( 5- \sqrt{7})/6 \right)$  \\ 
  of the form (\ref{metric2})  &    ( Jensen's Einstein metrics ) \\
  & ------------------------------------------------------------------ \\
    &  $(x_{23}, x_{13}, x_{12}, x_2)  \approx (1, 0.253386, \,1.01652, \, 0.245146)$ \\
       &  $(x_{23}, x_{13}, x_{12}, x_2)  \approx ( 1, 1.16137, \, 0.669071, \, 0.291175)$   \\
  \hline
 \end{tabular}
 \end{center} }
 
 \begin{proof} 
 
 \underline{Case 1.} 
 $\Ad(\SO(2)\times\SO(3)\times\SO(2))$-invariant products of the form (\ref{metric1})

 From Lemma \ref{lemma5.4} we see that  the components  of  the Ricci tensor ${r}$ for the invariant metric are given by 
{ \small 
 \begin{equation}\label{eq29}
\left. 
\begin{array}{lll} 
r_1 & = &  \displaystyle{
\frac{1}{20} \biggl(3 \frac{x_1}{{x_{12}}^2}} +2 \frac{x_1}{{x_{13}}^2} \biggr), 
\quad    
 r_2  \, \, \,  =  \, \, \, 
\displaystyle{\frac{1}{ 20 x_{2}} +
\frac{1}{10} \biggl(  \frac{x_2}{{x_{12}}^2} +  \frac{x_2}{{x_{23}}^2} \biggr),} 
\\  \\
r_{12} &= &  \displaystyle{\frac{1}{ 2 x_{12}} +\frac{1}{10}\biggl(\frac{x_{12}}{x_{13} x_{23}} - \frac{x_{13}}{x_{12} x_{23}} - \frac{x_{23}}{x_{12} x_{13}}\biggr) } 
\displaystyle{-
\frac{1}{20} \biggl(   \frac{x_1}{{x_{12}}^2} + 2 \frac{x_2}{{x_{12}}^2} \biggr)}, 
\\  \\
r_{23}  &= &  \displaystyle{\frac{1}{ 2 x_{23}} +\frac{1}{10}\biggl(\frac{x_{23}}{x_{13} x_{12}} - \frac{x_{13}}{x_{12} x_{23}} - \frac{x_{12}}{x_{23} x_{13}}\biggr) -
\frac{1}{10}   \frac{x_2}{{x_{23}}^2}}
\\  \\
r_{13}  &= &  \displaystyle{\frac{1}{  2 x_{13}} +\frac{3}{20}\biggl(\frac{x_{13}}{x_{12} x_{23}} - \frac{x_{12}}{x_{13} x_{23}} - \frac{x_{23}}{x_{12} x_{13}}\biggr) -
\frac{1}{20}   \frac{x_1}{{x_{13}}^2}  }.  
\end{array}  \right\}
\end{equation}
}
 
 We consider the system of equations   
 \begin{equation}\label{eq30a} 
 r_1 = r_2, \,\,\,\,  r_2 =  r_{12}, \,\, \,\, r_{12} = r_{23}, \,\,\,\,  r_{23} = r_{13}. 
 \end{equation}
Then finding Einstein metrics of the form (\ref{metric1}) reduces  to finding the positive solutions of system (\ref{eq30a}),  and  we normalize  our equations by putting $x_{23}=1$. Then we have the system of equations: 
{\small \begin{equation}\label{eq31a}
\left. { \begin{array}{lll}
g_1 &=& 2 {x_{1}} {x_{12}}^2 {x_{2}}+3 {x_{1}}
   {x_{13}}^2 {x_{2}}-2 {x_{12}}^2 {x_{13}}^2
   {x_{2}}^2 -{x_{12}}^2  {x_{13}}^2 -2
   {x_{13}}^2 {x_{2}}^2 = 0,  \\ 
g_2 &=&  {x_{1}} {x_{13}}
   {x_{2}}-2 {x_{12}}^3 {x_{2}}+2 {x_{12}}^2
   {x_{13}} {x_{2}}^2+{x_{12}}^2 {x_{13}}+2
   {x_{12}} {x_{13}}^2 {x_{2}} \\
  &&  -10 {x_{12}}
   {x_{13}} {x_{2}}+2 {x_{12}} {x_{2}}+4
   {x_{13}} {x_{2}}^2 = 0, \\ 
g_3 &=& -{x_{1}} {x_{13}}+4
   {x_{12}}^3+2 {x_{12}}^2 {x_{13}} {x_{2}}-10
   {x_{12}}^2 {x_{13}}+10 {x_{12}} {x_{13}}-4
   {x_{12}}-2 {x_{13}} {x_{2}} = 0, \\
 g_4 &=& {x_{1}}
   {x_{12}}+{x_{12}}^2 {x_{13}}-2 {x_{12}}
   {x_{13}}^2 {x_{2}}+10 {x_{12}} {x_{13}}^2-10
   {x_{12}} {x_{13}}-5 {x_{13}}^3+5
   {x_{13}} = 0. 
\end{array} } \right\} 
\end{equation} }

   We consider a polynomial ring $R= {\mathbb Q}[z, x_1, x_2, x_{12}, x_{13}] $ and an ideal $I$ generated by 
$\{ g_1, \, g_2, $  $ g_3, g_4,  \,z \,x_1 \, x_2 \, x_{12} \, x_{13} -1\}  
$  to find non-zero solutions of equations (\ref{eq31a}). 
We take a lexicographic order $>$  with $ z > x_1 >  x_2 > x_{12} > x_{13}$ for a monomial ordering on $R$. Then, by the aid of computer, we see that a  Gr\"obner basis for the ideal $I$ contains the  polynomial  
$(x_{13} - 1) \, h_1(x_{13}),$
where $h_{1}(x_{13})$ is a polynomial of   $x_{13}$  given by 
{\small 
\begin{eqnarray*}  
h_{1}(x_{13}) &=& 688046498713728 {x_{13}}^{22}-5679627129033984 
   {x_{13}}^{21}+21187741726130976
   {x_{13}}^{20} \\  & & 
     -47332135234207584 {x_{13}}^{19}+72943727603815728 
   {x_{13}}^{18}-88042204949117760 
   {x_{13}}^{17} \\  & &    +90811237969386720 
   {x_{13}}^{16}-75973652107795440 
   {x_{13}}^{15}+40485498601824360 
   {x_{13}}^{14} \\  & & 
   -388980702921240 
   {x_{13}}^{13}-24758853711650442 
   {x_{13}}^{12}+32500688684143066 
   {x_{13}}^{11} \\  & & 
   -27877210026039119 
   {x_{13}}^{10}+14899625214395426 
   {x_{13}}^9-1186420879578712 
   {x_{13}}^8 \\  & &
   -6317807798571000 
   {x_{13}}^7 +7671384589125120 
   {x_{13}}^6-6094614793248000 
   {x_{13}}^5\\  & & 
   +3670257014726400 
   {x_{13}}^4-1592931826944000 
   {x_{13}}^3+457180443648000 
   {x_{13}}^2 \\  & & 
   -76918947840000 {x_{13}}+5733089280000. 
\end{eqnarray*} }
 If $x_{13}\ne 1$ we solve the equation $ h_{1}(x_{13})=0$ numerically, and we obtain two positive solutions $x_{13}= a_{13}$ and $x_{13}= b_{13}$ which are given approximately as
 $ a_{13} \approx  1.13934, \, \,  b_{13} \approx 0.350124. $
 We also see that the Gr\"obner basis for the ideal $I$ contains the polynomials 
 $$x_{12} - w_{12}(x_{13}), \quad x_1 -w_1 (x_{13}), \quad x_2 -w_2 (x_{13}), $$
 where $ w_{12}(x_{13})$, $ w_{1}(x_{13})$ and $ w_{2}(x_{13})$ are polynomials of $x_{13}$ with rational coefficients. By substituting the values $ a_{13}$ and  $b_{13}$ for $x_{13}$  into $w_{12}(x_{13})$, $ w_{1}(x_{13})$ and $w_{2}(x_{13})$, we obtain two solutions of the system of equations $\{ g_1=0, g_2=0, g_3=0,  g_4=0  \}$ approximately given as
 $$(x_{13}, x_{12}, x_1,  x_2)  \approx (1.13934, \,0.620201, \, 0.831771, \, 0.149407 ), $$ $$ (x_{13}, x_{12},  x_1,  x_2)  \approx ( 0.350124, \, 1.03223, \, 0.455639, \, 0.121264). 
 $$
 If $x_{13}=1$ we see that   $x_{12} = x_1$, $x_{12} = x_2$  and $3 - 10 x_2 + 6 { x_2}^2 =0$. 
 Thus we obtain  two more solutions of the system of equations $\{ g_1=0, g_2=0, g_3=0,  g_4=0  \}$ as 
 $$(x_{13}, x_{12}, x_1, x_2)  = \left( 1,  ( 5+ \sqrt{7})/6, ( 5+ \sqrt{7})/6,  ( 5+ \sqrt{7})/6 \right),$$ $$(x_{13}, x_{12}, x_1, x_2 ) = \left( 1,  (  5 - \sqrt{7})/6, ( 5 - \sqrt{7})/6, ( 5 - \sqrt{7})/6 \right), 
 $$
 which are  known as the Jensen's Einstein metrics on Stiefel manifolds.
  
\medskip

 \underline{Case 2.}  $\Ad( \SO(4)\times\SO(2))$-invariant products of the form (\ref{metric2})

 From Lemma \ref{ricV4Rn} we see that  the components  of  the Ricci tensor ${r}$ for the invariant metric are given by 
{\small 
 \begin{equation}\label{eq26}
\left. {\small \begin{array}{l} 
 r_2  = 
\displaystyle{\frac{1}{10 x_2} +
\frac{1}{20} \biggl(  \frac{x_2}{{x_{12}}^2} +2 \frac{x_2}{{x_{23}}^2} \biggr)},
\\  \\
r_{12}   =  \displaystyle{\frac{1}{2 x_{12}} +\frac{1}{10}\biggl(\frac{x_{12}}{x_{13} x_{23}} - \frac{x_{13}}{x_{12} x_{23}} - \frac{x_{23}}{x_{12} x_{13}}\biggr) -
\frac{3}{20}  \frac{x_2}{{x_{12}}^2}},
\\  \\
r_{23}   =  \displaystyle{\frac{1}{2 x_{23}} +\frac{1}{20}\biggl(\frac{x_{23}}{x_{13} x_{12}} - \frac{x_{13}}{x_{12} x_{23}} - \frac{x_{12}}{x_{23} x_{13}}\biggr) -
\frac{3}{20} \frac{x_2}{{x_{23}}^2}},
\\  \\
r_{13}   =  \displaystyle{\frac{1}{ 2 x_{13}} +\frac{1}{5}\biggl(\frac{x_{13}}{x_{12} x_{23}} - \frac{x_{12}}{x_{13} x_{23}} - \frac{x_{23}}{x_{12} x_{13}}\biggr) }.
\end{array} } \right\} 
\end{equation} }
 We consider the system of equations   
 \begin{equation}\label{eq27a} 
 r_2 =  r_{12}, \,\,  r_{12} = r_{23}, \,\,  r_{23} = r_{13}. 
 \end{equation}
Then finding Einstein metrics of the form (\ref{metric2}) reduces  to finding positive solutions of system (\ref{eq27a}),  and  we normalize  our equations by putting $x_{23}=1$. Then we have the system of equations: 
{\small 
 \begin{equation}\label{eq27b}
\left. { \begin{array}{lll}  
f_1 &=& {x_{12}}^3 (-{x_{2}})+{x_{12}}^2 {x_{13}} {x_{2}}^2+{x_{12}}^2{x_{13}}+{x_{12}} {x_{13}}^2{x_{2}}
-5 {x_{12}} {x_{13}} {x_{2}} \\  && +{x_{12}} {x_{2}}+2 {x_{13}} {x_{2}}^2=0,  \\  
f_2 &=& 3 {x_{12}}^3+3 {x_{12}}^2 {x_{13}} {x_{2}}-10{x_{12}}^2 {x_{13}}-{x_{12}} {x_{13}}^2  +10 {x_{12}} {x_{13}}-3{x_{12}}-3 {x_{13}} {x_{2}}=0, \\  
f_3 &=& 3 {x_{12}}^2-3 {x_{12}} {x_{13}} {x_{2}}+10 {x_{12}} {x_{13}}-10{x_{12}}-5 {x_{13}}^2+5=0. 
\end{array} } \right\} 
\end{equation} }

 We consider a polynomial ring $R= {\mathbb Q}[z,  x_2, x_{12}, x_{13}] $ and an ideal $I$ generated by 
$\{ f_1, \, f_2, \, f_3, $  $  \,z  \, x_2 \, x_{12} \, x_{13} -1\}  
$  to find non-zero solutions of equations (\ref{eq27b}). 
We take a lexicographic order $>$  with $ z >   x_2 > x_{12} > x_{13}$ for a monomial ordering on $R$. Then, by the aid of computer, we see that a  Gr\"obner basis for the ideal $I$ contains the 
 polynomial  
$(x_{13} - 1) \, h_{2}(x_{13}),$
where $h_{2}(x_{13})$ is a polynomial of   $x_{13}$  given by 
{\small 
\begin{eqnarray*}   
h_{2}(x_{13}) &  = & 78808464 {x_{13}}^{10}-391536432
   {x_{13}}^9+848044372
   {x_{13}}^8-1028743244
   {x_{13}}^7 \\& &
   +802028465
   {x_{13}}^6-565003906
   {x_{13}}^5+511844730
   {x_{13}}^4-416144424
   {x_{13}}^3 \\ & & +210074472
   {x_{13}}^2-55497312
   {x_{13}}+5668704
 \end{eqnarray*} }
 If $x_{13}\ne 1$ we solve the equation $ h_{2}(x_{13})=0$ numerically, and we obtain two positive solutions $x_{13}= \alpha_{13}$ and $x_{13}= \beta_{13}$ which are given approximately as
 $ \alpha_{13} \approx  0.253386, \quad \beta_{13} \approx 1.16137. $
 We also see that the Gr\"obner basis for the ideal $I$ contains polynomials 
 $$x_{12} - w_{12}(x_{13}), \quad x_2 -w_2 (x_{13}), $$
 where $ w_{12}(x_{13})$ and $ w_{2}(x_{13})$ are polynomials of $x_{13}$ with rational coefficients. By substituting the values $  \alpha_{13}$ and  $\beta_{13}$ for $x_{13}$  into $w_{12}(x_{13})$ and $w_{2}(x_{13})$, we obtain two solutions of the system of equations $\{ f_1=0, f_2=0, f_3=0  \}$ approximately as
 $$(x_{13}, x_{12}, x_2)  \approx (0.253386, \,1.01652, \, 0.245146), \,\, (x_{13}, x_{12}, x_2)  \approx ( 1.16137, \, 0.669071, \, 0.291175). 
 $$
 If $x_{13}=1$ we see that  $x_{12} = x_2$  and $3 - 10 x_2 + 6 { x_2}^2 =0$. 
 Thus we obtain  two more solutions of the system of equations $\{ f_1=0, f_2=0, f_3=0  \}$ as 
 $$(x_{13}, x_{12}, x_2)  = \left( 1,  ( 5+ \sqrt{7})/6, ( 5+ \sqrt{7})/6 \right), \,  (x_{13}, x_{12}, x_2 ) = \left( 1,  (  5 - \sqrt{7})/6, (  5 - \sqrt{7})/6 \right), 
 $$
which are the same metrics as in Case 1. 

\end{proof}

 \end{document}